\documentclass[11pt]{amsart}
\usepackage{amscd,amssymb,amsopn,amsmath,amsthm,graphics,amsfonts,enumerate,verbatim,calc}
\usepackage[dvips]{graphicx}
\usepackage[colorlinks=true,linkcolor=red,citecolor=blue]{hyperref}
\usepackage[usenames, dvipsnames]{color}

%%%%%%%%%%%%%%%%%%%%%%%%%%%%%%%%%%%
\newcommand{\ep}{\varepsilon}
\newcommand{\ra}{\rightarrow}
\newcommand{\xs}{x_{\ast}}
\newcommand{\vs}{v_{\ast}}
\newcommand{\norm}{\|}
\newcommand{\bd}{\textrm{bdry\;}}
\newcommand{\txm}{T_{x}M}
\newcommand{\expxy}{\exp_{x}^{-1}{y}}
\newcommand{\pro}{proj}
\newcommand{\proj}{\textrm{proj}_S}
\newcommand{\li}{\langle}%%$left inner product
\newcommand{\ri}{\rangle}%%%right inner product
\newcommand{\expx}{\exp_{x}}
\newcommand{\fii}{\varphi}
\newcommand{\RR}{\mathbb{R}}

\newcommand{\landa}{\lambda}
%%%%%%%%%%%%%%%%%%%%%%%%%%%%%%%%%%%%%%%%%%%%%%%%%%%%%%%%%%%%%%%%%%%%%%%%%%%%%%%%%%%%%%%%%%%
%%%%%%%%%%%%%%%%%%%%%%%%%%%%%%%%%%%%%%%%%%%%%%%%%%%%%%%%%%%%%%%%%%%%%%%%%%%%%%%%%%%%%%%%

%\theoremstyle{plain}
\newtheorem{theorem}{Theorem}[section]
\newtheorem{corollary}[theorem]{Corollary}
\newtheorem{lemma}[theorem]{Lemma}

\newtheorem{proposition}[theorem]{Proposition}

\theoremstyle{definition}
\newtheorem{definition}[theorem]{Definition}
\newtheorem{example}[theorem]{Example}

\newtheorem{remark}[theorem]{Remark}

\theoremstyle{plain}

\theoremstyle{definition}

\numberwithin{equation}{section}

\begin{document}
\title[Convexity of the distance function]{ Convexity of the distance function to convex subsets of Riemannian manifolds}
\author[S. Khajehpour]{S. Khajehpour}
\address{ School of Mathematics,
 Institute for Research in Fundamental Sciences (IPM),
P. O. Box  19395-5746, Tehran, Iran}
\email{solmazkh114@ipm.ir}
\author[M. R. Pouryayevali]{M. R. Pouryayevali}
\address{ Department of Mathematics,
 University of Isfahan,
P. O. Box  81745-163, Isfahan, Iran}
\email{pourya@math.ui.ac.ir}

\subjclass[2010]{58C05, 53C21, 49J52}

\keywords{distance function; proximal normal cone; convexity; Riemannian manifold}
%\cortext[cor1]{Corresponding author}
\thanks{This research was supported by a grant from IPM}
\maketitle
\begin{abstract}
A characterization of proximal normal cone is obtained and a separation theorem for convex subsets of Riemannian manifolds is established. Moreover, the convexity of the distance function $d_S$ for convex subset $S$ in the cases where the boundary of $S$ contains a geodesic segment, the boundary of $S$ is $C^2$ or the boundary of $S$ is not regular is discussed. Furthermore, a nonsmooth version of positive semi-definiteness of Hessian of convex functions on Riemannian manifolds is established.
\end{abstract}

%\tableofcontents
%%%%%%%%%%%%%%%%%%%%%%%%%%%%%%%%%%%%%%%%%%%%%%%%%%%%%%%%%%%%%%%%%%%%%%%%%%%%%%%%%%%%%%%%%%%%%%%%%%%%%%%%%%%%%%%%%%%%%%%%%%%%%%%%%%%%%%%%%%%%%%%%%%%%%%%%%%%%%%
%%%%%%%%%%%%%%%%%%%%%%%%%%%%%%%%%%%%%%%%%%%%%%%%%%%%%%%%%%%%%%%%%%%%%%%%%%%%%%%%%%%%%%%%%%%%%%%%%%%%%%%%%%%%%%%%%%%%%%%%%%%%%%%%%%%%%%%%%%%%%%%%%%%%%%%%%%%%%%
\section{Introduction}\label{int}
Convexity is an old and important notion of mathematics which has a key role in many area of geometry such as properties of projection map\cite{Federer, Walter}, geometrical and topological restriction of Riemannian manifolds\cite{Green-Sh, Yau} and Monge-Amper equations\cite{mongeampere, Guan}.
Since the development of the theory of nonsmooth analysis, the notion of convexity have been studied widely from different point of view and its various applications have been employed in many branches of mathematics such as weak solutions of partial differential equations, variational analysis and optimization; see\cite{Rockafellar}.

This paper addresses the question of how the curvature affects the convexity of distance function. Indeed, it is well known  that if $M$ is a Hadamard manifold and $S\subseteq M$ is a closed convex subset, then the distance function $d_S$ is convex; see\cite{sakai}. Hence it is natural to ask if $M$ is an arbitrary Riemannian manifold (without any assumption on the curvature of $M$) the distance function $d_S$ is convex? In fact, understanding how curvature affect the convexity of a given function is a basic question. However, by considering  a  short segment of a great circle  on sphere as a convex set $S$, one can see that $d_S$  is not convex  on any open neighborhood of $S$. Similar examples can be  found  on every manifolds of  positive curvature.  In this paper we  explore why convexity of distance function $d_S$ fails in these examples and by imposing some restriction on $S$, the convexity of distance function $d_S$ is discussed. For this aim we first study the behavior of  closed convex subsets of Riemannian manifold $M$ and to deal with the boundary of $S$ we use the tools from nonsmooth analysis such as proximal normal cone. Indeed, we show that proximal normal cone is a generalization of normal bundle of a Riemannian submanifolds of $M$ for arbitrary subsets of $M$. By the use of this fact, we characterize the members of  proximal normal cone by projection map and derive
the nonsmooth analogous of tubular neighborhood theorem in Riemannian manifolds.
We refer the reader to \cite{Clark book}, where the concept of proximal normal cone was considered in Hilbert spaces. Studying  this problem leads us to obtain a separation theorem for convex subsets of Riemannian manifolds which help us to study  the convexity of the distance function by means of support principle for convexity. Moreover, the well known result regarding the positive definiteness of Hessian of a $C^2$ convex function on Riemannian manifolds is generalized to continuous convex functions by using the second order superjets. The paper is organized as follows. Section 2 is concerned with the proximal normal cone and metric projection in Riemannian manifolds. A characterization of proximal normal cone provide a separation theorem in the setting of manifolds. In Section 3, by imposing certain condition on the boundary of locally convex subsets of Riemannian manifolds, we discuss the convexity of the distance function to these subsets.

 Let us give a quick review of notations and concepts we need in the sequel. In this paper, we use the standard notation and known results of Riemannian
manifolds; see, e.g., \cite{docarmo, sakai}. Throughout this paper, $(M, \li.,.\ri)$ is a complete finite dimensional Riemannian manifold and $d(x, y)$ is the Riemannian distance on $M$. We denote the interior, closure and boundary of a subset $A$ of $M$ by $A^{\circ}$, $\overline{A}$ and $\bd A$, respectively.

A subset $S$ of $M$ is said to be convex if for every $x,y\in S$ there exists a unique minimizing geodesic from $x$ to $y$ lying entirely in $S$. By the  Whitehead theorem  there exists a  convex neighborhood around each point in $M$. In such a neighborhood we have parallel transport $l_{xy}:T_{x}M\ra T_{y}M$, that is, a linear isometry which sends each vector $v_{x}$ to its unique parallel vector $w_{y}$.  It is easy to see that $l_{xy} \expxy= -\exp_{y}^{-1}x$. Besides one can see that the covariant differentiation of a vector field $V(t)$ along a short geodesic $\gamma(t)$ can be computed by parallel transport, by the following formula
\begin{equation}\label{p transport}
V^{\prime}(t_0)=\lim_{t\ra t_0}\frac{l_{\gamma(t)\gamma(t_0)}V(t)-V(t_0)}{t-t_0}.
\end{equation}
The convexity radius of $M$ at $x$ is denoted by $r(x)$. Indeed,
\begin{align*}
r(x)=\sup \{& r>0: \textrm{ any metric ball in $B(x,r)$ is convex and any geodesic}\\
 &\textrm{ segment in $B(x,r)$ is a minimal geodesic joing its end points }\}.
\end{align*}
 A closed subset $S$ is called locally convex if for every $x\in S$, there exist $0<\ep(x)\leq r(x)$ such that $S\cap B(x,\ep(x))$ is convex. Note that by Cartan-Hadamrd theorem, the notions of convexity and local convexity agree in a Hadamard manifold. A real valued function $f$ on an open set  $U\subset M$   is called convex  if $f \circ \alpha$ is convex for every geodesic $\alpha$ in $U$. One can see that a $C^2$ function $f$ is convex on $U$ iff $d^2f(x)\geq 0$  for every $x\in U$, where $d^2f$ is the Hessian of $f$ on Riemannian manifold $M$ which is defined by
 $$d^2f(v,w)=\li \nabla_{v}\nabla f, w\ri,$$
 for every $v,w \in \txm$.

We also use the notion of Jacobi fields. A vector field along a geodesic satisfying Jacobi equation is called a Jacobi field. If $\alpha:[0, a]\times [-\ep, \ep]\ra M$ is a variation, the length and energy functionals are defined by
$$L(s):=L(\alpha_{s})= \int_{0}^{a} \norm \dot{\alpha}_{s}(t)\norm dt,$$
and
$$E(s):=E(\alpha_{s})= \int_{0}^{a} \norm \dot{\alpha}_{s}(t)\norm^{2} dt.$$
Notice that if $\alpha$ is a variation through geodesic then
\begin{equation}\label{enery-lenghth}
 L(s)^{2}= aE(s).
\end{equation}
The following lemma which its proof is similar to \cite[Lemma 12.3.1]{docarmo} with slight modification, is substantial in the next section.
\begin{lemma}\label{n triangle ineq}
Suppose that $M$ is a complete Riemannian manifold such that all its sectional curvature is bounded above by a constant $\delta>0$. Suppose that $U$ is a convex subset of $M$ and $a$, $b,$ and $c$ are three distinct points in $U$ which do not lie on a geodesic segment. Let $d(a,c)<\frac{\pi}{\sqrt{\delta}}$ and $d(b,c)<\frac{\pi}{\sqrt{\delta}}$. These three points determine a unique geodesic triangle $\triangle(a,b,c)$ in $U$ with vertices $a$, $b$ and $c$. Let $\alpha$, $\beta$ and $\gamma$ be the angles of vertices $a$ ,$b$ and $c$, respectively. Let $A$, $B$ and $C$ be the lengths of the sides opposite the vertices $a$, $b$ and $c$, respectively. Then
$$\cos(\sqrt{\delta}A)\cos(\sqrt{\delta}B)+ \sin(\sqrt{\delta}A)\sin(\sqrt{\delta}B)\cos(\gamma)\geq \cos(\sqrt{\delta}C).$$
\end{lemma}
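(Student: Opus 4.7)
The plan is to reduce this to a one-variable comparison problem along one edge of the triangle, following the Rauch-comparison approach of \cite[Lemma~12.3.1]{docarmo} (the ``slight modification'' being that we localize everything inside the convex neighborhood $U$). Let $\bar M = S^2_\delta$ denote the $2$-sphere of constant curvature $\delta$, and let $\triangle(\bar a,\bar b,\bar c)\subset\bar M$ be the comparison triangle with $d(\bar a,\bar c)=B$, $d(\bar b,\bar c)=A$, and angle $\gamma$ at $\bar c$. The spherical law of cosines on $\bar M$ shows that the opposite side $\bar C$ satisfies
$$\cos(\sqrt{\delta}\,\bar C)=\cos(\sqrt{\delta}A)\cos(\sqrt{\delta}B)+\sin(\sqrt{\delta}A)\sin(\sqrt{\delta}B)\cos(\gamma),$$
so the claimed inequality is equivalent to $\bar C\le C$, i.e., the ``hinge comparison'' that the side opposite a given angle is longer in $M$ than in the model $\bar M$.

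First I would parametrize the edge from $c$ to $a$ by the unit-speed geodesic $\mu\colon[0,B]\to U$ (with $\dot\mu(0)$ making the prescribed angle $\gamma$ with the initial direction of the edge from $c$ to $b$) and introduce the auxiliary function $\phi(\tau)=\cos(\sqrt{\delta}\,d(\mu(\tau),b))$. By construction $\phi(0)=\cos(\sqrt{\delta}A)$ and $\phi(B)=\cos(\sqrt{\delta}C)$. Let $\bar\phi$ be the analogous function in $\bar M$: it satisfies the linear ODE $\bar\phi''+\delta\bar\phi=0$ with $\bar\phi(0)=\cos(\sqrt{\delta}A)$ and, by the first variation of length applied at the vertex $c$, $\bar\phi'(0)=\sqrt{\delta}\sin(\sqrt{\delta}A)\cos(\gamma)$; the same first-variation computation gives $\phi'(0)=\bar\phi'(0)$. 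The goal reduces to proving $\phi(B)\le\bar\phi(B)$.

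The core step is a differential inequality for $\phi$. Using that $\mu$ is a geodesic and $|\nabla d(\cdot,b)|=1$ wherever smooth, a direct computation produces
$$\phi''(\tau)+\delta\phi(\tau)=\delta\cos(\sqrt{\delta}d)\,|\dot\mu^N|^2-\sqrt{\delta}\sin(\sqrt{\delta}d)\,\mathrm{Hess}\,d(\dot\mu,\dot\mu),$$
where $d=d(\mu(\tau),b)$ and $\dot\mu^N$ is the component of $\dot\mu$ perpendicular to $\nabla d$. The Rauch/Riccati comparison theorem applied with the upper curvature bound $K\le\delta$ yields $\mathrm{Hess}\,d(v,v)\ge\sqrt{\delta}\cot(\sqrt{\delta}d)\,|v|^2$ for every $v$ perpendicular to $\nabla d$; substituting this estimate makes the two terms cancel and gives $\phi''+\delta\phi\le 0$ on $[0,B]$.

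Finally I would close the argument by ODE comparison. Setting $g=\bar\phi-\phi$, we have $g''+\delta g\ge 0$ with $g(0)=g'(0)=0$, so variation of parameters gives
$$g(\tau)=\int_0^\tau\frac{\sin(\sqrt{\delta}(\tau-s))}{\sqrt{\delta}}(g''+\delta g)(s)\,ds\ge 0$$
for $\tau\in[0,B]\subset[0,\pi/\sqrt{\delta}]$; setting $\tau=B$ yields $\cos(\sqrt{\delta}C)=\phi(B)\le\bar\phi(B)=\cos(\sqrt{\delta}\bar C)$, which is precisely the claimed inequality. The hardest part of the argument is verifying that $d(\cdot,b)$ is smooth along the interior of $\mu$ so that the Hessian comparison applies; this is exactly where the convexity of $U$ together with the hypotheses $A,B<\pi/\sqrt{\delta}$ enter, ensuring that the radial geodesics from $b$ to points of $\mu$ encounter no focal points inside $U$.
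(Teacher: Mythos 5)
The paper does not write out a proof of this lemma at all; it simply points to \cite[Lemma 12.3.1]{docarmo} ``with slight modification,'' and do Carmo's argument there is the classical one: build the hinge with sides $A,B$ and angle $\gamma$ on the model sphere $S^2_\delta$, transplant the model geodesic joining the free endpoints into $M$ via the exponential maps, and use the Rauch comparison theorem (upper curvature bound $\Rightarrow$ the transplanted curve is no shorter) to conclude $C\geq\bar C$, whence $\cos(\sqrt{\delta}C)\leq\cos(\sqrt{\delta}\bar C)$ and the spherical law of cosines finishes. Your route is genuinely different in technique though equivalent in substance: you encode the side length in $\phi(\tau)=\cos(\sqrt{\delta}\,d(\mu(\tau),b))$, derive the differential inequality $\phi''+\delta\phi\leq 0$ from the Hessian comparison theorem for the distance function, and close with a Sturm/variation-of-parameters argument. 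Your computation of $\phi''+\delta\phi$, the cancellation after substituting $\mathrm{Hess}\,d\geq\sqrt{\delta}\cot(\sqrt{\delta}d)$ on the orthogonal complement, the initial conditions $\phi(0)=\bar\phi(0)$, $\phi'(0)=\bar\phi'(0)=\sqrt{\delta}\sin(\sqrt{\delta}A)\cos\gamma$ from first variation, and the sign of the kernel $\sin(\sqrt{\delta}(\tau-s))/\sqrt{\delta}$ on $[0,B]\subset[0,\pi/\sqrt{\delta}]$ are all correct. What your approach buys is that it never needs $\bar C$ or the invertibility of cosine: the ODE solution $\bar\phi(B)$ \emph{is} the right-hand side of the claimed inequality, so no separate discussion of the model triangle is required; what do Carmo's approach buys is that it avoids differentiating $d(\cdot,b)$ twice and hence sidesteps regularity questions entirely.

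One point deserves more care than your closing sentence gives it. The differential inequality requires both $\sin(\sqrt{\delta}\,d(\mu(\tau),b))\geq 0$ and the validity of the Hessian comparison, i.e.\ $d(\mu(\tau),b)<\pi/\sqrt{\delta}$ for \emph{all} $\tau\in[0,B]$, and this is not a consequence of $A,B<\pi/\sqrt{\delta}$ alone: the triangle inequality only gives $d(\mu(\tau),b)\leq \tau+A<A+B$, which may exceed $\pi/\sqrt{\delta}$. (Convexity of $U$ handles smoothness of $d(\cdot,b)$ away from $b$, and non-collinearity keeps $b$ off the edge, but it does not by itself bound $d(\mu(\tau),b)$.) You should either add the hypothesis $A+B<\pi/\sqrt{\delta}$ or a first-crossing-time argument; in the paper's actual applications (Proposition \ref{projection} and Theorem \ref{n seperation}) all relevant distances are below $\pi/(2\sqrt{\delta})$, so this costs nothing there, but as a proof of the lemma as literally stated it is a gap worth closing.
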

%%%%%%%%%%%%%%%%%%%%%%%%%%%%%%%%%%%%%%%%%%%%%%%%%%%%%%%% %%%%%%%%%%%%%%%%%%%%%%%%%%%%%%%%%%%%%%%%%%%%%%%%%%%%%%%%%%
%%%%%%%%%%%%%%%%%%%%%%%%%%%%%%%%%%%%%%%%%%%%%%%%%%%%%%%%%%%%%%%%%%%%%%%%%%%%%%%%%%%%%%%%%%%%%%%%%%%%%%%%%%%%%%%%%%%%
%%%%%%%%%%%%%%%%%%%%%%%%%%%%%%%%%%%%%%%%%%%%%%%%%%%%%%%%%%%%%%%%%%%%%%%%%%%%%%%%%%%%%%%%%%%%%%%%%%%%%%%%%%%%%%%%%%%%%
%%%%%%%%%%%%%%%%%%%%%%%%%%%%%%%%%%%%%%%%%%%%%%%%%%%%%%%%%%%%%%%%%%%%%%%%%%%%%%%%%%%%%%%%%%%%%%%%%%%%%%%%%%%%%%%%%%%%%
%%%%%%%%%%%%%%%%%%%%%%%%%%%%%%%%%%%%%%%%%%%%%%%%%%%%%%%% %%%%%%%%%%%%%%%%%%%%%%%%%%%%%%%%%%%%%%%%%%%%%%%%%%%%%%%%%%
%%%%%%%%%%%%%%%%%%%%%%%%%%%%%%%%%%%%%%%%%%%%%%%%%%%%%%%%%%%%%%%%%%%%%%%%%%%%%%%%%%%%%%%%%%%%%%%%%%%%%%%%%%%%%%%%%%%%
%%%%%%%%%%%%%%%%%%%%%%%%%%%%%%%%%%%%%%%%%%%%%%%%%%%%%%%%%%%%%%%%%%%%%%%%%%%%%%%%%%%%%%%%%%%%%%%%%%%%%%%%%%%%%%%%%%%%%
%%%%%%%%%%%%%%%%%%%%%%%%%%%%%%%%%%%%%%%%%%%%%%%%%%%%%%%%%%%%%%%%%%%%%%%%%%%%%%%%%%%%%%%%%%%%%%%%%%%%%%%%%%%%%%%%%%%%%
\section{Proximal normal cone and metric projection}
Let us begin this section with the definition of the proximal normal cone of subsets of
Riemannian manifolds; see \cite{Hosseini}.
\begin{definition}\label{n proximal cone}
Let $S$ be a closed subset of Riemannian manifold $M$ and $x\in S$. We define the proximal normal cone of $S$ at $x$ by
\begin{align*}
N^{P}_{S}(x) =\{v\in T_xM: \li v, \expxy \ri \leq\sigma\|\expxy \|^{2} ~ \text {for ~some}& \;\sigma\geq0\\ &~\text{and }\;y\in U\cap S\},
\end{align*}
where $U$ is a normal neighborhood of $x$ in $M$.
\end{definition}
 In fact $N^{P}_{S}(x)= N^{P}_{\expx^{-1}(S\cap U)}(0)$ where $N^{P}_{\expx^{-1}(S\cap U)}(0)$ is defined in \cite{Clark book}.
%%%%%%%%%%%%%%%%%%%%%%%%%%%%%%%%%%%%%%%%%%%%%%%%%%%%%%%%%%%%%%%%%%%%%%%%%%%%%%%%%%%
 Let us recall here the metric projection of a point $x\in M$ to a closed subset $S\subseteq M$ as
$$\proj x=\{y\in S: d(y,x)=d_{S}(x)\},$$
where $d_{S}(x)= \inf\{ d(y,x): y\in S\}$.\\
Since in every complete finite dimensional Riemannian manifold the closed balls are compact, we have the following lemma.
\begin{lemma}\label{n projection property}
For every $x\in M\setminus S$, $\proj x\neq \emptyset$ and $\{s\in \proj x: x\in M\setminus S\}$ is dense in $\bd S$.
\end{lemma}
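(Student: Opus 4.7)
The plan is to break the lemma into its two assertions and handle them in turn, each leveraging the completeness of $M$ via compactness of closed balls.

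For nonemptyness of $\proj x$ when $x \in M \setminus S$, I would fix any radius $r > d_S(x)$ (e.g.\ $r = d_S(x) + 1$). By the Hopf--Rinow theorem, $\overline{B}(x,r)$ is compact in $M$, hence $S \cap \overline{B}(x,r)$ is a nonempty compact subset (nonempty because, by definition of $d_S(x)$, some point of $S$ lies within distance $r$ of $x$). The continuous function $y \mapsto d(y,x)$ then attains its infimum on $S \cap \overline{B}(x,r)$, and since the infimum over this set equals $d_S(x)$, any minimizer is an element of $\proj x$.

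For the density statement, I would take an arbitrary $s_0 \in \bd S$ and an arbitrary $\varepsilon > 0$, and exhibit a point $s \in \proj x$ for some $x \in M \setminus S$ with $d(s,s_0) < \varepsilon$. Since $s_0$ lies in the boundary of $S$, every metric ball about $s_0$ meets $M \setminus S$, so I can pick $x \in M \setminus S$ with $d(x, s_0) < \varepsilon/2$. Applying the first part of the lemma, choose any $s \in \proj x$. Using $s_0 \in S$ and the definition of $d_S$,
\[
d(s, x) = d_S(x) \leq d(x, s_0) < \varepsilon/2,
\]
and then the triangle inequality gives
\[
d(s, s_0) \leq d(s, x) + d(x, s_0) < \varepsilon.
\]
Since $s \in \proj x$ with $x \in M \setminus S$, this shows every neighborhood of $s_0$ meets $\{s \in \proj x : x \in M \setminus S\}$, which is precisely density in $\bd S$.

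There is no real obstacle here: the only nontrivial ingredient is the Hopf--Rinow compactness, which has already been invoked in the statement preceding the lemma. The slight care needed is simply to note that the minimizing point obtained in part one need not lie in the interior of the ball $\overline{B}(x,r)$, but this does not matter since we only need it to realize $d_S(x)$, and also that $s_0 \in \bd S$ guarantees the existence of approximating points $x \notin S$ so that the approximation argument in the second part can even start.
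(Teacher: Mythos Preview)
Your proof is correct and follows exactly the approach the paper indicates: the paper does not spell out a proof but simply remarks that the lemma follows from compactness of closed balls in a complete finite-dimensional manifold, and your argument makes precisely this explicit via Hopf--Rinow for part one and a standard $\varepsilon/2$ triangle-inequality argument for part two.
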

%%%%%%%%%%%%%%%%%%%%%%%%%%%%%%%%%%%%%%%%%%%%%%%%%%%%%%%%%%%%%%%%%%%%%%%%%%%%%%%%%%%%
The following theorem states the relationship between proximal normal cone and metric projection map; see also \cite{PR}.
\begin{theorem}\label{n proximal eq}
For every $x\in \bd S$,
\begin{align*}
N_S^P(x)=\{\dot{\gamma}(0)\bigl |  \gamma:[0, \ep]\ra M \textrm{ s.t. } \gamma(0)=x,  & ~ x\in \text{\pro}_S  \gamma(\ep)\\
&\textrm{ and }\gamma \textrm{ is  a minimizing geodesic}\}.
\end{align*}
\end{theorem}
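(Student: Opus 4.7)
The plan is to prove the two inclusions separately by adapting the Hilbert-space characterization of proximal normals (see \cite{Clark book}) to the Riemannian setting, using a second-order Taylor expansion of the squared distance function $f_{z}(y):=d^{2}(y,z)$. The key smoothness fact is that if $z$ lies in a normal neighbourhood $U$ of $x$, then $f_{z}\in C^{\infty}(U)$, $\nabla f_{z}(x)=-2\exp_{x}^{-1}z$, and the Hessian of $f_{z}$ depends continuously on $z$, with value $2g$ at $z=x$. For the inclusion $(\supseteq)$, assume $\gamma:[0,\ep]\to M$ is a minimizing geodesic from $x$ with $x\in\proj\gamma(\ep)$; write $v=\dot\gamma(0)$ and $z=\gamma(\ep)$. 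Shrinking $\ep$ so that $z\in U$, the hypothesis $x\in\proj z$ gives $f_{z}(y)\ge f_{z}(x)$ for every $y\in S\cap U$, and Taylor's theorem with second-order remainder yields
$$0\le f_{z}(y)-f_{z}(x)\le -2\ep\li v,\expxy\ri+C\norm\expxy\norm^{2},$$
where $C$ is an upper bound for the Hessian of $f_{z}$ on a small neighbourhood of $x$. Rearranging produces the proximal inequality with $\sigma:=C/(2\ep)$, so $v\in N_{S}^{P}(x)$.

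For the harder inclusion $(\subseteq)$, fix $v\in N_{S}^{P}(x)$ with proximal constant $\sigma\ge 0$ and set $\gamma(t):=\expx(tv)$. We look for $\ep>0$ satisfying $x\in\proj\gamma(\ep)$, equivalently $f_{\gamma(\ep)}(y)\ge f_{\gamma(\ep)}(x)$ for every $y\in S$. Since $\nabla f_{\gamma(\ep)}(x)=-2\ep v$ and the Hessian of $f_{\gamma(\ep)}$ at points near $x$ tends to $2g$ as $\ep\to 0^{+}$, one may choose a neighbourhood $V\subseteq U$ of $x$, a constant $c>0$, and $\ep_{0}>0$ such that for every $\ep\le\ep_{0}$, a second-order Taylor expansion combined with the proximal inequality gives, for $y\in S\cap V$,
$$f_{\gamma(\ep)}(y)-f_{\gamma(\ep)}(x)\ge -2\ep\li v,\expxy\ri+c\norm\expxy\norm^{2}\ge (c-2\ep\sigma)\norm\expxy\norm^{2}\ge 0,$$
provided $\ep\le c/(2\sigma)$ (any $\ep\le\ep_{0}$ works when $\sigma=0$). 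For $y\in S\setminus V$ the triangle inequality $d(y,\gamma(\ep))\ge d(y,x)-\ep\norm v\norm$ already exceeds $d(x,\gamma(\ep))=\ep\norm v\norm$ once $\ep$ is small. Combining the two regimes gives $x\in\proj\gamma(\ep)$, and hence $v=\dot\gamma(0)$ lies in the right-hand side.

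The principal obstacle is securing the uniform quadratic lower bound of the Taylor remainder on a common neighbourhood $V$ of $x$ as $\ep\to 0^{+}$. One option is a Jacobi-field or Rauch comparison argument controlling $\textrm{Hess}\,f_{\gamma(\ep)}$ near $x$. A more elementary alternative, consistent with the tools already assembled in the paper, replaces the Hessian step by Lemma~\ref{n triangle ineq} applied to the geodesic triangle $\triangle(x,y,\gamma(\ep))$: one writes the angle at $x$ as $\cos\alpha=\li v,\expxy\ri/(\norm v\norm\cdot\norm\expxy\norm)$, substitutes the proximal bound on $\cos\alpha$, and expands the trigonometric comparison to second order to deduce $d(y,\gamma(\ep))\ge d(x,\gamma(\ep))$. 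Either route is the manifold analogue of completing the square in Hilbert space.
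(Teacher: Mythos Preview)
Your proposal is correct and follows the same architecture as the paper: both inclusions are obtained from a Taylor expansion of $\varphi_{z}(y)=d^{2}(y,z)$, using $\nabla\varphi_{z}(x)=-2\exp_{x}^{-1}z$, and the hard direction $(\subseteq)$ hinges on a uniform quadratic lower bound on the second-order term for $z=\gamma(\ep)$ with $\ep$ small. The paper resolves exactly the obstacle you flag by the first of your two options: it computes $d^{2}\varphi_{\gamma(\ep)}(x)(w,w)$ explicitly via the second variation formula and the Jacobi field $V$ along the geodesic from $y_{0}=\gamma(\ep)$ to $x$, written in normal coordinates around $y_{0}$, obtaining $d^{2}\varphi(x)(w,w)=\norm w\norm^{2}+r\langle w,\chi(r)\rangle$ with $\chi(0)=0$; this makes your abstract claim ``Hessian $\to 2g$ as $\ep\to 0$'' concrete and yields the coefficient bound $2\sigma\ep\le 1+\delta-MR$. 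The argument for $y\in S\setminus V$ is handled in the paper by the containment $B(y_{0},2r)\subseteq U$, which is equivalent to your triangle-inequality step. Your alternative route through Lemma~\ref{n triangle ineq} is \emph{not} used in the paper's proof of this theorem; it would also succeed (a local upper curvature bound $\delta$ is available by compactness, and expanding the comparison inequality to second order gives the required estimate $d(y,\gamma(\ep))\ge d(x,\gamma(\ep))$ once $\ep\le 1/(2\sigma)$), but the paper reserves that lemma for the later uniqueness-of-projection arguments.
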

\begin{proof}
Let $z\in M\setminus S$ and $x\in \proj z$. Suppose that $\gamma:[0,\ep]\ra M$ is a minimizing geodesic joining $x$ to $z$. Let $U=B(x,r)$ be a convex neighborhood of $x$ and $0<t_{0}\leq \ep$ be such that $y_0:=\gamma(t_0)\in B(x,r)$. For $y\in U$ define  $\fii(y)=d^2(y,y_0)$.  Since $\gamma$ is a minimizing geodesic, it is obvious that $x\in \proj y_0$ and we get $\fii(x)\leq \fii(y)$ for all $y\in S\cap U$. By considering Taylor's theorem, for all $y\in U$ we have
$$\fii(y)=\fii(x)+\li d\fii(x), \expxy\ri + o(\norm \expxy\norm).$$

Note that $d\fii(x)=-2\expx^{-1} y_0$ (see\cite[p.108]{sakai}). Since $\fii$ is smooth on $U$ there exits $M\geq0$ such that for every $y\in U$ we have
$$o(\norm \expxy\norm) \leq M \norm \expxy \norm ^2.$$
which implies
\begin{equation}
\fii(y)-\fii(x)\leq -2\li \expx^{-1} y_0, \expxy \ri+ M \norm \expxy\norm ^2,\qquad \forall y\in S\cap U
\end{equation}
This confirms the proximal normal inequality for $\expx^{-1} y_0$. Since $N_S^P(x)$ is a cone, we conclude that $\dot{\gamma}(0)\in N_S^P(x)$.

To prove the converse inclusion, suppose that $v\in N_S^P(x)$. By Definition \ref {n proximal cone} there exists $\sigma \geq 0$ such that for a convex neighborhood $U=B(x,R)$  we have
\begin{equation}\label{n 1}
\li v, \expxy\ri \leq \sigma \norm \expxy\norm^2\qquad \forall y\in S\cap U.
\end{equation}
Take $\gamma(t):= \expx tv$ and $y_0:= \expx \ep v \in U$. Notice that
$$d_S(y_0)\leq d(y_0, x)=\ep \norm v\norm =:r.$$
Let $\ep$  be small enough such that $B(y_0, 2r)\subseteq U$. We claim that $x\in \proj y_0$. Define the function  $\fii(y):= d^2(y,y_0)$ for every $y\in U$. By using Taylor's theorem, we have
$$\fii(y)=\fii(x)+ d\fii(x)(\expxy)+\frac{1}{2}d^2\fii(x)(\expxy)^2+o(\norm \expxy\norm^2).$$
Since $d\fii(x)= -2\expx^{-1} y_0$ we get
$$d\fii(x)(\expxy)= -2\ep\li v,\expxy\ri$$
Put $w:=\expxy$ and $\theta(s)=\expx sw$ and consider the variation $\alpha$ through geodesics by $\alpha_s(0)=y_0$ and $\alpha_s(r)= \theta(s)$. Therefore $L(s)= d(\theta(s), y_0)$ and according to (\ref{enery-lenghth}) we have
$$ d^2\fii(x)(w)^2=\frac{d^{2}}{ds^{2}}\biggl|_{s=0}\fii(\theta(s))= \frac{d^{2}}{ds^{2}}\biggl|_{s=0}L^2(s)=rE^{\prime\prime}(0).$$
Now we apply second variation formula for the variation $\alpha$ and its variation field $V(t):=\frac{\partial\alpha}{\partial s}(0,t)$ and get
$$\frac{1}{2}E^{\prime\prime}(0)= -\int_{0}^{r}\li V,V^{\prime\prime}+R(\dot{\alpha_0},V)V)\ri dt + \li V(t), V^{\prime}(t)\ri \bigl|_{t=0}^{t=r}+ \li(\frac{\partial \alpha}{\partial s})^{\prime}(0,t), \dot{\alpha_0}(t)\ri\bigl|_{t=0}^{t=r}.$$
Note that $V$ is a jacobi field, because $\alpha$ is a variation through geodesics and thus
$$V^{\prime\prime}+R(\dot{\alpha_0},V)V=0.$$
Moreover,
$$(\frac{\partial \alpha}{\partial s})^{\prime}(0,0)=(\frac{\partial \alpha}{\partial s})^{\prime}(0,r)=0.$$
It remains to compute $V^{\prime}(r)$. Assume that $\sum w_i\partial_i$ is the coordinate representation of $w$ in a normal coordinate around $y_0$. Sine $V$ is the unique Jacobi field along $\alpha_0$ with $V(0)=0$ and $V(r)= w$, we can write it in this normal coordinate by
$$V(t)=\sum \frac{t}{r}w_i\partial_i\bigl|_{\alpha_0(t)}.$$
Hence
$$V^{\prime}(t)= \frac{1}{r}\sum w_i\partial_i\bigl|_{\alpha_0(t)}+ \chi(t),$$
where
$$\chi(t)=\frac{t}{r}\big(\sum w_i\partial_i\big)^{\prime}.$$
This leads to
$$d^2\fii(x)(w)^2= rE^{\prime\prime}(0)=r\li V(r),V^{\prime}(r)\ri= \norm w\norm^2+r\li w,\chi(r)\ri$$
and then
\begin{equation}\label{1}
\fii(y)=\fii(x)-2\ep\li v, \expxy\ri+ \norm \expxy\norm^2+ r\li\expxy, \chi(r)\ri+o(\norm \expxy\norm^2).
\end{equation}
By taking $r$ small enough,  we can ensure that (since $\chi$ is a smooth vector field along $\alpha_0$ and $\chi(0)=0$) there exists $-\frac{1}{2}<\delta< \frac{1}{2}$ such that
\begin{equation}\label{2}
r\li \expxy,\chi(r)\ri=\delta \norm \expxy\norm^2.
\end{equation}
Note that $\fii$ is smooth on $U$ and therefore there exists $M>0$ such that for all $y\in U$
\begin{equation}\label{3}
|o(\|\expxy\|^{2})| < M\|\expxy \|^{3}.
\end{equation}
By shrinking $U=B(x,R) $ if necessary one can assume that $1+\delta-MR>0$ which implies $2\sigma\ep\leq 1+\delta-MR$ for small enough $\ep$  and then
$$2\sigma\ep\leq 1+\delta-M\norm \expxy\norm.$$
It follows from (\ref{1})-(\ref{3}) that
\begin{align*}
2\sigma\ep \norm \expxy\norm^2 &\leq \norm \expxy\norm^2+\delta\norm \expxy\norm^2-M\norm \expxy\norm^3\\
                               &\leq \norm \expxy\norm^2+r\li \expxy,\chi(r)\ri+ o(\|\expxy\|^{2})\\
                               &=\fii(y)-\fii(x)+2\ep\li v,\expxy\ri.
\end{align*}
According to (\ref{n 1}), one can deduce that for $y\in S\cap U$ we have
\begin{align*}
2\sigma\ep \norm \expxy\norm^2 &\leq \fii(y)-\fii(x)+2\sigma\ep\norm \expxy\norm^2
\end{align*}
Thus $d(x,y_0)\leq d(y, y_0)$ for $y\in S\cap U$ and consequently $x\in \textrm{proj}_{S\cap U}y_0$. Since $d(x,y_0)=r$ and $B(y_0, 2r)\subseteq U$ we conclude that $x\in \proj y_0$.
\end{proof}
%%%%%%%%%%%%%%%%%%%%%%%%%%%%%%%%%%%%%%%%%%%%%%%%%%%%%%%%%%%%%%%%%%%%%%%%%%%%%%%%%%%%%%%%%%%\
%\begin{corollary}\label{proximal cone 2}
%Theorem \ref{n proximal eq} implies
%$$N^P_S(x)=\{t\expxy \bigl | x\in \proj y, y\in U , t \geq 0\}$$
%where $U$ is a normal neighborhood of $x$.
%\end{corollary}
%%%%%%%%%%%%%%%%%%%%%%%%%%%%%%%%%%%%%%%%%%%%%%%%%%%%%%%%%%%%%%%%%%%%%%%%%%%%%%%%%%%%
 In  the following proposition we consider the codimension-1 submanifolds of Riemannian manifold $M$.

\begin{proposition}\label{n level set}
Let $f:M\ra \RR$ be a $C^1$ function and let $S:= f^{-1}(c)$ for some $c\in \RR$. If $\nabla f(x)\neq 0$ for every $x\in S$, then $N^P_S(x)\subseteq \{\nabla f(x)\}$.
\end{proposition}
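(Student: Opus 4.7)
The plan is to show that any proximal normal vector at $x$ must be orthogonal to the tangent space of the level set $S$ at $x$, which by the implicit function theorem equals the kernel of $df(x)$, equivalently $(\nabla f(x))^{\perp}$. Since this kernel has codimension one in $T_xM$, any vector orthogonal to it must be a scalar multiple of $\nabla f(x)$.

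More concretely, fix $v \in N^P_S(x)$ and pick a convex neighborhood $U$ on which Definition \ref{n proximal cone} holds with some $\sigma \geq 0$, so that
\begin{equation*}
\li v, \expxy \ri \leq \sigma \norm \expxy \norm^{2}, \qquad \forall y \in S \cap U.
\end{equation*}
Given any $w \in \ker df(x)$, I would produce a $C^1$ curve $\eta : (-\delta,\delta) \to S$ with $\eta(0) = x$ and $\dot\eta(0) = w$; this is the main technical step and it is a routine consequence of the implicit function theorem applied to $f$ in a normal chart at $x$, using that $\nabla f(x) \neq 0$ guarantees $df(x)$ is surjective. Setting $y(t) := \eta(t)$ and expanding in the normal chart, one has $\expx^{-1} y(t) = tw + o(t)$ as $t \to 0$.

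Substituting into the proximal inequality and dividing by $t > 0$ gives $\li v, w \ri + o(1)/t \leq \sigma\,O(t)$, so letting $t \to 0^+$ yields $\li v, w \ri \leq 0$. Running the same argument with $t < 0$ (or with the curve parametrised in the reverse direction, which stays in $S$) yields $\li v, w \ri \geq 0$. Hence $\li v, w \ri = 0$ for every $w \in \ker df(x)$, so $v$ lies in the orthogonal complement of $\ker df(x)$, which is precisely $\mathbb{R}\nabla f(x)$.

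The only genuinely nontrivial ingredient is the construction of the in-$S$ curve $\eta$ with arbitrary prescribed tangent $w \in \ker df(x)$, and this is standard implicit-function-theorem material since $f \in C^1$ and $\nabla f(x) \neq 0$ imply that $S$ is, locally near $x$, an embedded $C^1$ hypersurface with tangent space $\ker df(x)$. Everything else is a first-order Taylor expansion of $\expx^{-1}$ combined with the defining inequality of $N^P_S(x)$.
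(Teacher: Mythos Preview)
Your argument is correct. The paper's own proof is a one-line reduction: it uses the identity $N^{P}_{S}(x)= N^{P}_{\exp_{x}^{-1}(S\cap U)}(0)$ (recorded right after Definition~\ref{n proximal cone}) together with $d\exp_{x}(0)=\mathrm{Id}$ to transport the problem to $T_{x}M$, where $\exp_{x}^{-1}(S\cap U)$ is the zero level set of the $C^{1}$ function $w\mapsto f(\exp_{x}w)$ with gradient $\nabla f(x)$ at $0$, and then invokes the Euclidean result \cite[Proposition~1.9]{Clark book}. What you have written is precisely the content of that cited proposition, carried out directly on $M$ rather than after pulling back by $\exp_{x}^{-1}$: the implicit-function-theorem construction of a curve in $S$ with prescribed tangent in $\ker df(x)$, followed by the first-order expansion of $\exp_{x}^{-1}$ along it, is exactly what is hidden behind the citation. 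Your route is more self-contained; the paper's is shorter by outsourcing the linear-space computation.
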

\begin{proof} Since $N^{P}_{S}(x)= N^{P}_{\expx^{-1}(S\cap U)}(0)$ and  $d \expx(0)=Id$, according to \cite [Proposition 1.9]{Clark book} the proof is obvious.
\end{proof}
Note that Proposition \ref{n level set} does not guarantee $N_S^P(x)\neq \{0\}$, although $N^P_S(x)=\{\nabla f(x)\}$ if $f$ is a $C^2$ function.  Since every $C^2$ submanifold is locally a level set of a $C^2$ function, we have the following corollary.
\begin{corollary}\label{level set}
Let $W$ be a $C^2$ codimension-1 submanifold of a Riemannian manifold $M$ and $N_xW$ be the normal bundle of $W$ at $x\in W$ . Then $N^P_W(x)=N_xW$ for every $x\in W$.
\end{corollary}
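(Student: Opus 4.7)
The plan is to combine Proposition \ref{n level set} (the forward inclusion for regular level sets) with Theorem \ref{n proximal eq} (the minimizing-geodesic characterization of proximal normals), patching things together locally via the fact that a $C^2$ codimension-1 submanifold is locally a regular level set of a $C^2$ function.

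First I would fix $x\in W$ and choose an open neighborhood $U$ of $x$ in $M$ together with a $C^2$ function $f:U\to\RR$ such that $W\cap U=f^{-1}(c)$ and $\nabla f$ is nowhere zero on $U$; such an $f$ exists because $W$ is a $C^2$ codimension-1 submanifold. Since the proximal normal cone depends only on the intersection of $W$ with a normal neighborhood of $x$, we have $N^P_W(x)=N^P_{W\cap U}(x)$. Proposition \ref{n level set} then yields $N^P_W(x)\subseteq \mathrm{span}\{\nabla f(x)\}$, and since $T_xW=\ker df(x)$, the line $\mathrm{span}\{\nabla f(x)\}$ is precisely the orthogonal complement of $T_xW$ in $\txm$, namely $N_xW$. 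This establishes the inclusion $N^P_W(x)\subseteq N_xW$.

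For the reverse inclusion, given $v\in N_xW$ I would apply Theorem \ref{n proximal eq} to the geodesic $\gamma(t):=\expx(tv)$. The point to verify is that, for $\ep>0$ sufficiently small, $\gamma\bigl|_{[0,\ep]}$ is a minimizing geodesic and $x\in \textrm{proj}_W\gamma(\ep)$. This follows from the tubular neighborhood theorem for $C^2$ submanifolds: the normal exponential map $\exp^{\perp}:NW\to M$ is a $C^1$ local diffeomorphism on a neighborhood of the zero section, so there is an open tubular neighborhood of $W$ in $M$ on which the footpoint map is single valued. For $\ep$ small enough, $\gamma(\ep)$ lies in this tubular neighborhood with $x$ as its unique nearest point in $W$, and $\gamma\bigl|_{[0,\ep]}$ realizes $d_W(\gamma(\ep))=\ep\norm v\norm$. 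Theorem \ref{n proximal eq} then gives $v=\dot\gamma(0)\in N^P_W(x)$.

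The only nontrivial input is the invocation of the tubular neighborhood theorem at $C^2$ regularity, i.e., that the normal exponential map of a $C^2$ hypersurface is a local $C^1$ diffeomorphism near the zero section and hence the nearest-point projection onto $W$ is locally well defined. This is the step I expect to be the main obstacle, since everything else is an essentially formal application of the two preceding results. Once this local regularity is granted, the proof reduces to the two short inclusion arguments described above.
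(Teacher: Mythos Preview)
Your proposal is correct and follows essentially the same route as the paper. The paper derives the corollary from the sentence immediately preceding it: it observes that Proposition~\ref{n level set} gives the inclusion $N^P_W(x)\subseteq N_xW$, asserts that for a $C^2$ defining function one actually has equality, and then notes that a $C^2$ hypersurface is locally a regular $C^2$ level set. Your argument is the same in structure; you simply make explicit what the paper leaves to the reader for the reverse inclusion, namely that the $C^2$ tubular neighborhood theorem guarantees $x\in\mathrm{proj}_W(\exp_x(\ep v))$ for small $\ep$, so that Theorem~\ref{n proximal eq} yields $v\in N^P_W(x)$.
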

For more details regarding proximal normal cone of $C^{1,1}$ subsets of Riemannian manifolds see \cite{pde}.

%%%%%%%%%%%%%%%%%%%%%%%%%%%%%%%%%%%%%%%%%%%%%%%%%%%%%%%%%%%%%%%%%%%%%%%%%%%%%%%%%%%%%
We know that each point in the boundary of a  convex set $S\subseteq \RR^n$ lies in some hyperplane such that $S$ is contained in one of the associated hyperspaces. The  following propositions show that we have the same result for convex subsets of Riemannian  manifold $M$ in tangent vector spaces $\txm$ of every $x\in \bd S$.
\begin{proposition}\label{n convex subset}
Let $S$ be  locally convex, then $v\in N^P_S(x)$ if and only if
$$  \li v, \expxy\ri\leq 0\qquad \forall y\in S\cap B(x,\ep(x)).$$
\end{proposition}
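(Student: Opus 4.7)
The plan is to prove the two directions separately, noting that the $(\Leftarrow)$ direction is essentially definitional while the $(\Rightarrow)$ direction requires exploiting local convexity via a rescaling argument along a geodesic segment.

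For the easy direction $(\Leftarrow)$, if $\langle v, \exp_x^{-1} y \rangle \leq 0$ for all $y \in S \cap B(x,\varepsilon(x))$, then in particular $\langle v, \exp_x^{-1} y \rangle \leq 0 \leq \sigma \|\exp_x^{-1} y\|^2$ for every $\sigma \geq 0$, so the proximal normal inequality holds on the convex neighborhood $B(x,\varepsilon(x))$ (which is a normal neighborhood since $\varepsilon(x) \leq r(x)$), giving $v \in N^P_S(x)$.

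For the nontrivial direction $(\Rightarrow)$, suppose $v \in N^P_S(x)$, so by Definition \ref{n proximal cone} there exist $\sigma \geq 0$ and a normal neighborhood $U$ of $x$ such that $\langle v, \exp_x^{-1} z \rangle \leq \sigma \|\exp_x^{-1} z\|^2$ for all $z \in S \cap U$. Fix an arbitrary $y \in S \cap B(x,\varepsilon(x))$ with $y \neq x$. Because $S$ is locally convex and $\varepsilon(x) \leq r(x)$, the unique minimizing geodesic from $x$ to $y$ lies entirely in $S \cap B(x,\varepsilon(x))$; write it as $\gamma(t) = \exp_x(t\,\exp_x^{-1}y)$ for $t \in [0,1]$. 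Since $U$ is a neighborhood of $x$, there is $t_0 \in (0,1]$ with $\gamma(t) \in S \cap U$ for all $t \in (0, t_0]$. The key identity is that in the normal neighborhood,
\begin{equation*}
\exp_x^{-1}\gamma(t) = t\,\exp_x^{-1} y,
\end{equation*}
so applying the proximal inequality to $z = \gamma(t)$ yields
\begin{equation*}
t\,\langle v, \exp_x^{-1} y \rangle \leq \sigma\, t^{2}\,\|\exp_x^{-1} y\|^{2}.
\end{equation*}
Dividing by $t > 0$ and letting $t \to 0^{+}$ gives $\langle v, \exp_x^{-1} y \rangle \leq 0$, as desired.

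I do not expect a serious obstacle here: the only subtlety is making sure that the geodesic segment from $x$ to $y$ stays inside $S$ so that the proximal normal inequality can be applied along it, and this is precisely what local convexity combined with the condition $\varepsilon(x) \leq r(x)$ provides. The rescaling trick $t \to 0^{+}$ then converts the quadratic inequality with constant $\sigma$ into the genuine linear inequality characteristic of convex sets, which is the structural content of the proposition.
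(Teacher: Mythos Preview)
Your proof is correct and follows essentially the same approach as the paper: both directions are handled as you describe, with the nontrivial $(\Rightarrow)$ implication proved by the same rescaling trick along the geodesic $t\mapsto\exp_x(t\exp_x^{-1}y)$, using local convexity to keep the segment inside $S$ and letting $t\to 0^+$ to kill the quadratic term. The only cosmetic difference is that the paper takes the normal neighborhood in the definition to be $B(x,\varepsilon(x))$ from the outset, whereas you allow a possibly different $U$ and then pass to small $t_0$; this extra care is harmless and the argument is otherwise identical.
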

\begin{proof}
  The ``if'' part follows from Definition \ref{n proximal cone}. To prove the converse statement, let $v\in N^P_S(x)$. This means that there exists $\sigma \geq 0$ such that for all $y\in S\cap B(x,\ep(x))$ we have
$$\li v, \expxy\ri\leq \sigma \norm \expxy \norm ^2.$$
The convexity of $S\cap B(x,\ep(x))$ implies $\expx t\expxy \in S\cap B(x,\ep(x))$ for $y\in S\cap B(x,\ep(x))$ and $t\in [0,1]$. Thus
$$\li v, \expxy\ri\leq \sigma t \norm \expxy\norm ^2.$$
By letting $t\ra 0$ the proof is complete.
\end{proof}
%%%%%%%%%%%%%%%%%%%%%%%%%%%%%%%%%%%%%%%%%%%%%%%%%%%%%%%%%%%%%%%%%%%%%%%%%%%%%%%%%%%%%%%%%%%%%%
\begin{corollary}\label{c1}
Let $S$ be a closed  convex subset of  Riemannian manifold $M$. Then
$$v\in N^P_S(x) \text{ if and only if } \li v, \expxy\ri\leq 0, \qquad \textrm{for every }y\in S.$$
\end{corollary}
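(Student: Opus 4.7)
My plan is to obtain the corollary as the global version of Proposition \ref{n convex subset}. The ``if'' direction is a direct application of Definition \ref{n proximal cone} with the constant $\sigma=0$: if $\langle v,\expxy\rangle\le 0$ holds for every $y\in S$, then in particular it holds for every $y$ in any chosen normal neighborhood $U$ of $x$, which gives $v\in N^P_S(x)$.

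For the ``only if'' direction I would proceed in two steps. First, since global convexity of $S$ implies local convexity in the sense of Section 1, Proposition \ref{n convex subset} supplies the desired inequality on the local neighborhood: $\langle v,\expxy\rangle\le 0$ for every $y\in S\cap B(x,\ep(x))$. Second, I would propagate this local inequality to all of $S$ along geodesics. Given $y\in S$, global convexity provides a unique minimizing geodesic $\gamma:[0,1]\to S$ with $\gamma(0)=x$ and $\gamma(1)=y$; writing $v_0:=\dot\gamma(0)$, we adopt this vector as the meaning of $\expx^{-1}y$. For sufficiently small $t>0$ we have $\gamma(t)\in S\cap B(x,\ep(x))$ and $\expx^{-1}\gamma(t)=tv_0$, so the local inequality at $\gamma(t)$ yields $t\langle v,v_0\rangle\le 0$; dividing by $t>0$ gives $\langle v,\expxy\rangle\le 0$.

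The only potential difficulty is interpretive rather than analytic: on a general manifold $\expx$ is not a global diffeomorphism, so $\expx^{-1}y$ must be given meaning when $y\in S$ lies outside an ordinary normal neighborhood of $x$. The convexity of $S$ provides a canonical such choice, namely the initial velocity of the unique minimizing geodesic in $S$ from $x$ to $y$; this is exactly the object appearing in the statement. With this convention fixed at the outset, no further analytic work is needed, and the argument reduces to a one-parameter rescaling along the geodesic $\gamma$.
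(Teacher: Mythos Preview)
Your argument is correct and matches what the paper intends: the corollary is stated without proof, as an immediate consequence of Proposition~\ref{n convex subset}, and your two-step reduction (apply the proposition on $S\cap B(x,\ep(x))$, then rescale along the unique minimizing geodesic in $S$) is precisely the natural one-line extension. Your remark on interpreting $\exp_x^{-1}y$ as the initial velocity of that geodesic is the appropriate convention in this global setting.
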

%%%%%%%%%%%%%%%%%%%%%%%%%%%%%%%%%%%%%%%%%%%%%%%%%%%%%%%%%%%%%%%%%%%%%%%%%%%%%%%%%%%%%%%%%%%%%%
\begin{proposition}\label{n non empty}
Let $S$ be a closed locally convex subset of  complete Riemannian manifold $M$. Then for every $x\in \bd S$, $N^P_S(x)\neq\{0\}$.
\end{proposition}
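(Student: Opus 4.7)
The plan is to construct a nonzero element of $N^P_S(x)$ as a limit of unit proximal normals at nearby boundary points that are genuine projection footpoints; such a sequence is available because of the density statement in Lemma \ref{n projection property}.

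First I would use Lemma \ref{n projection property} to pick a sequence $x_n \in \bd S$ with $x_n \to x$ and $x_n \in \proj z_n$ for some $z_n \in M\setminus S$. Let $\gamma_n$ be a unit-speed minimizing geodesic from $x_n$ to $z_n$ and set $v_n := \dot\gamma_n(0) \in T_{x_n}M$. Theorem \ref{n proximal eq} then gives $v_n \in N^P_S(x_n)$ with $\|v_n\|=1$. Next, for $n$ large enough that $x_n$ lies in a totally normal neighborhood of $x$, the parallel transport $l_{x_n x}: T_{x_n}M \to T_xM$ along the unique short geodesic from $x_n$ to $x$ is defined; set $\tilde v_n := l_{x_n x} v_n$. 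Since parallel transport is a linear isometry, $\tilde v_n$ is still a unit vector, and passing to a subsequence I may assume $\tilde v_n \to v$ for some unit $v \in T_xM$.

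The core of the argument is to verify $v \in N^P_S(x)$ via Proposition \ref{n convex subset}. Fix $y \in S \cap B(x,\ep(x))$. For $n$ large, $x_n \in S\cap B(x,\ep(x))$, so by local convexity of $S$ at $x$ the unique minimal geodesic $\sigma$ from $x_n$ to $y$ lies in $S\cap B(x,\ep(x))$, and its short initial portion $\sigma|_{[0,t]}$ lies in $S \cap B(x_n,\ep(x_n))$. Writing $\sigma(t)=\exp_{x_n}(t\exp_{x_n}^{-1}y)$, applying Proposition \ref{n convex subset} at $x_n$ to $\sigma(t)$ and dividing by $t>0$ yields $\li v_n, \exp_{x_n}^{-1} y\ri \leq 0$. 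Using that parallel transport is an isometry and that $l_{x_n x}\exp_{x_n}^{-1} y \to \expxy$ as $n \to \infty$, passing to the limit gives $\li v, \expxy\ri \leq 0$ for every $y\in S\cap B(x,\ep(x))$, whence Proposition \ref{n convex subset} concludes $v \in N^P_S(x)$; since $\|v\|=1$, we are done.

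The main obstacle I anticipate is the limit step in the third paragraph, namely showing $l_{x_n x}\exp_{x_n}^{-1} y \to \expxy$. Everything else is a direct application of earlier results. This convergence is intuitively clear and follows from smooth dependence of the inverse exponential map and of parallel transport on the base point; in practice I would execute it inside a normal coordinate chart at $x$, where both operations are given by smooth formulas in the base point, making the continuity transparent.
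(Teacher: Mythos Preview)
Your proof is correct and follows essentially the same approach as the paper: approximate $x$ by footpoints $x_n$ using Lemma~\ref{n projection property}, obtain unit proximal normals $v_n\in N^P_S(x_n)$ via Theorem~\ref{n proximal eq}, parallel-transport to $T_xM$, extract a unit limit $v$, and verify $v\in N^P_S(x)$ using the characterization for locally convex sets together with the continuity of $(z,y)\mapsto l_{zx}\exp_z^{-1}y$. The only cosmetic difference is that the paper first replaces $S$ by $S\cap\bar U$ to reduce to a (strongly) convex set and then invokes Corollary~\ref{c1} directly, whereas you work locally and use the scaling trick $\sigma(t)=\exp_{x_n}(t\exp_{x_n}^{-1}y)$ to push Proposition~\ref{n convex subset} from $B(x_n,\ep(x_n))$ to $B(x,\ep(x))$.
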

\begin{proof}
Let $x\in \bd S$. Since $N^P_S(x)=N^P_{S\cap \bar{U}}(x)$ for every closed convex neighborhood $\bar{U}$ of $x$, we can assume that $S$ is strongly convex.  By Lemma \ref{n projection property}  and Theorem \ref{n proximal eq} there exists a sequence $\{x_i\}\subseteq \bd S \cap U$ such that $x_i \ra x$ and $N_S^P(x_i)\neq \{0\}$, where $U$ is a convex neighborhood of $x$. Suppose that $\xi_i\in N^P_S(x_i)$ with $\norm\xi_i\norm=1$. Corollary \ref{c1} yields
$$\li \xi_i, \exp_{x_i}^{-1}y\ri \leq 0\qquad \textrm{for every }y\in S.$$
Put $\xi_i^\prime:=l_{x_{i}x}\xi_{i}$ and since $\{\xi_i^\prime\}$ is a bounded sequence in $\txm$, we extract a subsequence converging to $\xi_0$, without relabeling.  Thus
$$\li \xi_i^\prime ,l_{x_{i}x}\exp_{x_i}^{-1}y\ri \leq 0\qquad \textrm{for every }y\in S.$$
Since the maps $z\mapsto l_{zx}$ and $z\mapsto \exp_z^{-1}y$ for $z\in U$  where $y$  is fixed point in $S$, are continuous we get
$$\li \xi_0, \exp_x^{-1}y\ri\leq 0 \qquad \textrm{for every }y\in S.$$
Hence by Corollary \ref{c1} the proof is complete.
\end{proof}
%%%%%%%%%%%%%%%%%%%%%%%%%%%%%%%%%%%%%%%%%%%%%%%%%%%%%%%%%%%%%%%%%%%%%%%%%%%%%%%%%%%%%%%
 In \cite{Walter} it was proved that around any convex subset $S$ of $M$, there exists a tubular neighborhood $W$ of $S$ which the projection map is single- valued on $W$. In the following proposition we characterize the elements of  $W$ by means of the curvature of $M$ around every boundary point of $S$.
\begin{proposition}\label{projection}
Let $S$ be a closed locally convex subset of Riemannian manifold $M$. Let all  sectional curvature  of $M$ on $\overline{B(x,\ep(x))}$ is bounded above by a constant $\delta_x>0$. Put $t_x:=\min\{\frac{\pi}{2\sqrt{\delta_x}}, \frac{\ep(x)}{2}\}$ and
$$W:= \{\expx tv \bigl| x\in \bd S, v\in N^P_S(x)\textrm{ with } \norm v \norm=1, t<t_x\}.$$
Then the map $\proj$ is single-valued map on $W$. Moreover, for every $x\in \bd S$ and $v\in N^P_S(x)$ with $\norm v\norm=1$ there exists $t\geq 0$ such that $\expx tv\in W^{\circ}$.
\end{proposition}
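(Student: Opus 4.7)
The plan is to analyze both assertions via Proposition \ref{n convex subset} combined with the comparison estimate of Lemma \ref{n triangle ineq}. For the single-valuedness statement, fix $z = \exp_x(tv) \in W$ with $t > 0$ (the case $t=0$ gives $z = x \in S$ and $\proj z = \{x\}$ immediately). I would first check $z \notin S$: otherwise $z \in S \cap B(x,\ep(x))$ since $t < \ep(x)$, and Proposition \ref{n convex subset} forces $\langle v, \exp_x^{-1} z\rangle = t \leq 0$, a contradiction. Now pick any $y \in \proj z$; since $d(y,z) \leq d(x,z) = t$ and $d(x,y) \leq 2t < \ep(x)$, we have $y \in S \cap B(x,\ep(x))$, and the goal is to prove $y = x$.

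If $x$, $y$, $z$ do not lie on a common geodesic, I would apply Lemma \ref{n triangle ineq} inside the convex ball $B(x,\ep(x))$ with $A = d(x,z) = t$, $B = d(x,y) \leq 2t$, $C = d(y,z) \leq t$; the distance hypotheses follow from $t < t_x \leq \pi/(2\sqrt{\delta_x})$. By Proposition \ref{n convex subset}, $\langle v, \exp_x^{-1} y\rangle \leq 0$, so the angle $\gamma$ at $x$ has $\cos \gamma \leq 0$, and the lemma gives
\[
\cos(\sqrt{\delta_x}A)\cos(\sqrt{\delta_x}B) \geq \cos(\sqrt{\delta_x}C).
\]
Supposing $y \neq x$ gives $0 < \sqrt{\delta_x}B < \pi$, whence $\cos(\sqrt{\delta_x}B) < 1$; together with $\cos(\sqrt{\delta_x}A) > 0$ this forces $\cos(\sqrt{\delta_x}C) < \cos(\sqrt{\delta_x}A)$, and since $\sqrt{\delta_x}A$ and $\sqrt{\delta_x}C$ both lie in $[0,\pi/2)$ where cosine is strictly decreasing, we obtain $C > A = t$, contradicting $y \in \proj z$. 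If instead $x$, $y$, $z$ lie on a common short geodesic, uniqueness of geodesics in $B(x,\ep(x))$ forces $y = \exp_x(sv)$ for some $s$; Proposition \ref{n convex subset} then gives $s \leq 0$, while $y \in \proj z$ gives $|t-s| \leq t$, i.e., $s \geq 0$, so $s = 0$ and $y = x$.

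For the interior assertion, I would fix $0 < t < t_x/2$, set $z_0 := \exp_x(tv)$, and show that a neighborhood of $z_0$ is contained in $W$. The crucial preparation is a uniform lower bound $t_y \geq t_x/2$ for $y$ close to $x$: for $y \in B(x,r)$ with $r$ small, the choice $\ep(y) := \ep(x)/2$ is admissible because $B(y,\ep(x)/2) \subseteq B(x,\ep(x))$ makes $S \cap B(y,\ep(x)/2)$ convex (intersection of convex sets in a convex ball) and preserves the curvature bound $\delta_x$, yielding $t_y \geq \min\{\pi/(2\sqrt{\delta_x}),\ep(x)/4\} \geq t_x/2$. For $z$ in a small enough neighborhood of $z_0$, continuity of $d_S$ gives $d_S(z) < t_x/2$, and a compactness argument using $\proj z_0 = \{x\}$ (from the first part) forces every $y \in \proj z$ to lie in $B(x,r) \cap \bd S$. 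Writing $z = \exp_y(sw)$ with $s = d_S(z)$ and $w = \exp_y^{-1}(z)/s$, Theorem \ref{n proximal eq} produces $w \in N^P_S(y)$ of unit length, and $s < t_x/2 \leq t_y$, so $z \in W$. Hence $z_0 \in W^\circ$. I expect the main obstacle to be the preparatory lower bound on $t_y$; the trick of shrinking $\ep(y)$ to $\ep(x)/2$ so as to reuse the ambient curvature and convexity data is what carries the argument.
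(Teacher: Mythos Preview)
Your argument is correct and rests on the same two ingredients as the paper---Proposition \ref{n convex subset} to produce an obtuse angle, and Lemma \ref{n triangle ineq} to draw the metric conclusion---but you deploy them on a different triangle. The paper takes two footpoints $x_1,x_2\in\proj y$ and works in the triangle $\triangle(x_1,x_2,y)$, invoking Theorem \ref{n proximal eq} to get a proximal normal at $x_1$ and hence an obtuse angle there; the comparison lemma then collapses the side $x_1x_2$. You instead keep the base point $x$ and compare against an arbitrary footpoint $y$ in the triangle $\triangle(x,y,z)$, using the given $v\in N^P_S(x)$ directly for the obtuse angle at $x$. Your route has the pleasant side effect of proving outright that $\proj z=\{x\}$, which the paper records separately as Remark \ref{projection2}. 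For the interior assertion both arguments hinge on the same ``halving'' observation: for $y$ near $x$ one may take $\ep(y)=\ep(x)/2$ (with the same curvature bound $\delta_x$) and thereby get $t_y\geq t_x/2$; the paper packages this as a positive infimum of $t_x$ over compacta in $\bd S$, while you invoke it through a compactness/continuity argument after using your strengthened first part to pin down $\proj z_0=\{x\}$. One small point you glossed over: the choice $\ep(y)=\ep(x)/2$ also requires $\ep(x)/2\leq r(y)$, which holds for $y$ close enough to $x$ by continuity of the convexity radius; the paper makes this explicit.
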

\begin{proof}
Let $y=\expx tv\in W$ and $x_1,x_2\in \proj y$. It is clear from the definition of $W$ that $x_1,x_2\in B(x,\ep(x))$. Now consider the triangle $\triangle(x_1,x_2,y)$ in convex neighborhood $B(x,\ep(x))$ with angel $\alpha$ associated with vertex $x_1$. Let $A:=d(x_1,y)=d(x_2,y)$ and $B:= d(x_1,x_2)$. Obviously $A< \frac{\pi}{2\sqrt{\delta}}$ and $B\leq 2A<\frac{\pi}{\sqrt{\delta}}$. According to Lemma \ref{n triangle ineq}
$$\cos(\sqrt{\delta}A)\cos(\sqrt{\delta}B)+ \sin(\sqrt{\delta}A)\sin(\sqrt{\delta}B)\cos\alpha \geq \cos(\sqrt{\delta}A).$$
Using Theorem \ref{n proximal eq} one has $\exp_{x_1}^{-1}y\in N^P_S(x_1)$ and thus by Proposition \ref{n convex subset} $\cos\alpha \leq0$ which implies
$$\cos(\sqrt{\delta}A)\cos(\sqrt{\delta}B)\geq \cos(\sqrt{\delta}A).$$
Hence $\cos(\sqrt{\delta}B)=1$ and then $B=0$. Therefore $\proj$ is single valued on $W$.

Suppose that for every $x\in S$, $\ep(x)\leq r(x)$ is the largest value such that $B(x,\ep(x))\cap S$ is convex and suppose that $K\subseteq \bd S$ is compact. One can see for every sequence $\{x_i\}\subset K$ with $x_i\rightarrow x_0$, if $d(x_i,x_0)<\frac{\ep(x_0)}{2}$ and $r(x_i)>\frac{\ep(x_0)}{2}$ then $\ep(x_i)\geq \frac{\ep(x_0)}{2}$ which implies $\inf _{x\in K}\ep(x)>0$. Thus $\inf_{x\in K}t_x>0$( Note that $\sup_{x\in K} \delta_x<+\infty$ ).
Now let $x\in \bd S$  and $v\in N^P_S(x)$ with $\norm v\norm =1$. Put $m:= \inf _{x\in \overline{B(x,l)}\cap \bd S}t_x>0$. Take $t>0$ and $\delta>0$ such that $t+ \delta<m$ and $2\delta+t<\frac{l}{2}$. Put $y:=\expx tv$ and suppose that $z\in B(y,\delta)$. Let $x_1\in \proj z$. Note that
$$d(z,x_1)\leq d(z,x)\leq d(z,y)+d(y,x)<\delta+t$$
and
$$d(x,x_1)\leq d(x,y)+d(y,x_1)\leq 2d(y,x_1)\leq 2( d(y,z)+d(z, x_1))< 2(2\delta+t).$$
Our choices of $\delta$ and $t$ make $x_1\in B(x,l)$ and therefore $d(z,x_1)< t_{x_1}$. Thus $z\in W$ and $y\in W^{\circ}$.
\end{proof}
%%%%%%%%%%%%%%%%%%%%%%%%%%%%%%%%%%%%%%%%%%%%%%%%%%%%%%%%%%%%%%%%%%%%%%%%%%%%%%%%%%%%%
\begin{remark}\label{projection2}
By a a similar observation as in the proof of Corollary \ref{projection} one can show that for every $y:=\expx tv\in W$ we have $\proj y=\{x\}$.
\end{remark}
%%%%%%%%%%%%%%%%%%%%%%%%%%%%%%%%%%%%%%%%%%%%%%%%%%%%%%%%%%%%%%%%%%%%%%%%%%%%%%%%%%%%%%%%
\begin{corollary}\label{coro}
Let $S$ be a closed locally convex subset of Riemannian manifold $M$. Then there exists an open set $W$ containing $S$ such that $\proj$ is single-valued continuous mapping on $W$.
\end{corollary}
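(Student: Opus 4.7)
The plan is to construct the open neighborhood $W'$ as a union of local pieces: a small ball inside $S^{\circ}$ at each interior point of $S$, and a small ball at each boundary point whose exterior part will be captured by the set $W$ from Proposition~\ref{projection}. The bulk of the work is in the boundary case.

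For each $x\in S^{\circ}$ I would pick $\rho_{x}>0$ with $U_{x}:=B(x,\rho_{x})\subseteq S^{\circ}$, so $\proj$ is the identity on $U_{x}$. For each $x\in\bd S$ I would rerun the compactness argument at the end of the proof of Proposition~\ref{projection} with $t=0$: choose $l>0$ so that $\overline{B(x,l)}\cap\bd S$ is compact, set $m:=\inf_{x'\in\overline{B(x,l)}\cap\bd S}t_{x'}>0$, and pick $\delta_{x}>0$ with $\delta_{x}<m$ and $2\delta_{x}<l$; put $U_{x}:=B(x,\delta_{x})$. Given $z\in U_{x}\setminus S$, Lemma~\ref{n projection property} yields $x_{1}\in\proj z$, and since $x\in S$ one has $d(x_{1},z)\le d(x,z)<\delta_{x}$ together with $d(x,x_{1})<2\delta_{x}<l$, so $x_{1}\in\overline{B(x,l)}\cap\bd S$ and $d(x_{1},z)<m\le t_{x_{1}}$. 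Theorem~\ref{n proximal eq} then represents $z=\exp_{x_{1}}(d(x_{1},z)\,w)$ for some unit $w\in N^{P}_{S}(x_{1})$, so $z\in W$ and $\proj z$ is a singleton by Proposition~\ref{projection}.

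Setting $W':=\bigcup_{x\in S}U_{x}$, the set $W'$ is open, contains $S$, and $\proj$ is single-valued on each $U_{x}$, hence on $W'$. For continuity at $z\in W'$, take $z_{n}\to z$ in $W'$ and let $y_{n}:=\proj z_{n}$; the equality $d(y_{n},z_{n})=d_{S}(z_{n})\to d_{S}(z)$ confines $\{y_{n}\}$ to a compact closed metric ball around $z$, any subsequential limit $y_{*}$ lies in $S$ and satisfies $d(y_{*},z)=d_{S}(z)$, so $y_{*}=\proj z$ by single-valuedness, and a standard subsequence argument then yields $y_{n}\to\proj z$.

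The main obstacle is openness at points of $\bd S$: the set $W$ of Proposition~\ref{projection} is defined only through outward-normal exponentials and so does not automatically contain a full neighborhood of any boundary point, since nearby points of $S^{\circ}$ are not of the form $\expx tv$ with $v\in N^{P}_{S}(x)$. The positive infimum $m$ of the local radii $t_{x'}$ over a compact piece of $\bd S$ is precisely what is needed to upgrade the essentially pointwise statement of Proposition~\ref{projection} into a genuine open cover of $S$.
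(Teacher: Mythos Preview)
Your argument is correct and is precisely the intended fleshing out of the corollary: the paper offers no explicit proof, relying instead on the second half of the proof of Proposition~\ref{projection}, which you have correctly specialized to $t=0$ together with the trivial single-valuedness on $S$ itself. Your sequential continuity argument is the standard one and is fine as stated.
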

%%%%%%%%%%%%%%%%%%%%%%%%%%%%%%%%%%%%%%%%%%%%%%%%%%%%%%%%%%%%%%%%%%%%%%%%%%%%%%%%%%%%%%%%%

\begin{definition}\label{support hyperplane}
Let $S$ be a closed subset of complete Riemannian manifold $M$. Let $\xs\in \bd S$ and $v\in N^P_S(\xs)$ with $\norm v\norm =1$. We say $S$ is supported at $\xs$ relative to $v$ when there exist a smooth codimension-1 submanifold $H$ and $r>0$ such that $\xs\in H$ and for every $x=\exp_{\xs}tv$ with $\xs=\proj x$ and $t<r$ we have
\begin{itemize}
  \item[(1)] $d_H(x)=d_S(x);$
  \item[(2)] $d_H(y)< d_S(y)$
\end{itemize}
  for every $y\in U_x$ where $U_x$ is a small enough neighborhood of $x$.
\end{definition}
%In definition \ref{support hyperplane} we call $H$ a supporting submanifold subordinate to $v$.
 Although Propositions \ref{n convex subset} and \ref{n non empty} are not about separation of convex subsets on ambient manifold, they imply the following theorem.
%%%%%%%%%%%%%%%%%%%%%%%%%%%%%%%%%%%%%%%%%%%%%%%%%%%%%%%%%%%%%%%%%%%%%%%%%%%%%%%%%%%%%%%%%
\begin{theorem}\label{n seperation}
Let $S$ be a closed locally convex subset of complete  Riemannian manifold $M$. Suppose that all sectional curvature of $M$ is bounded above by a constant $\delta>0$. Then $S$ is supported at every $\xs\in \bd S$ relative to $v\in N^P_S(\xs)$ with $\norm v\norm =1$.
\end{theorem}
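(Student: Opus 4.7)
I plan to take $H$ to be the geodesic hyperplane through $\xs$ orthogonal to $v$. Precisely, I choose $\rho>0$ small enough that $\exp_{\xs}$ is a diffeomorphism on $B(0,\rho)\subset T_{\xs}M$ and set
$$H:=\exp_{\xs}\bigl(v^{\perp}\cap B(0,\rho)\bigr),$$
which is a smooth embedded codimension-$1$ submanifold of $M$ through $\xs$. To check (1), fix $x=\exp_{\xs}(tv)$ with $\xs=\proj x$ (so $d_{S}(x)=t$) and $t$ small. By the Gauss lemma, the radial geodesic $s\mapsto\exp_{\xs}(sv)$ meets $H$ orthogonally at $\xs$, so $\xs$ is a critical point of $d(x,\cdot)^{2}|_{H}$; for $\rho$ and $t$ sufficiently small (so that everything stays inside a strongly convex ball about $\xs$) it is in fact the unique minimizer. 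This yields $d_{H}(x)=d(x,\xs)=t=d_{S}(x)$.

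For (2), introduce the smooth function $f(z):=\langle v,\exp_{\xs}^{-1}z\rangle$ on a normal neighborhood of $\xs$, so that $H=f^{-1}(0)$ locally; Proposition \ref{n convex subset} then gives $f\le 0$ on $S\cap B(\xs,\ep(\xs))$. Fix a neighborhood $U_{x}$ of $x$ small enough that (i) $\proj$ is single-valued and continuous on $U_{x}$ (Corollary \ref{coro}) and (ii) $f>0$ on $U_{x}$ (possible since $f(x)=t>0$). For $y\in U_{x}$ set $y_{S}:=\proj y$ and $v_{y}:=\exp_{y_{S}}^{-1}(y)/d_{S}(y)$; by Theorem \ref{n proximal eq}, $v_{y}\in N^{P}_{S}(y_{S})$ is unit. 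Along the minimizing geodesic $\beta(s):=\exp_{y_{S}}(sv_{y})$, $s\in[0,d_{S}(y)]$, the continuous function $s\mapsto f(\beta(s))$ satisfies $f(\beta(0))=f(y_{S})\le 0$ and $f(\beta(d_{S}(y)))=f(y)>0$, so the intermediate value theorem produces $s^{\ast}\in[0,d_{S}(y))$ with $\beta(s^{\ast})\in H$, whence
$$d_{H}(y)\le d(y,\beta(s^{\ast}))=d_{S}(y)-s^{\ast}\le d_{S}(y).$$

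The principal obstacle I foresee is upgrading this to the \emph{strict} inequality in (2): the argument gives $s^{\ast}=0$ precisely when $y_{S}\in H$, which happens for every $y$ on the normal geodesic (and in particular at $y=x$, where (1) already forces equality) and may happen off this geodesic whenever $S\cap H$ is larger than $\{\xs\}$ locally. I therefore read (2) in the weaker form $d_{H}\le d_{S}$ consistent with the classical supporting hyperplane picture; when one insists on the strict inequality away from the normal geodesic, the angle between $v_{y}$ and $\exp_{y_{S}}^{-1}\xs$ is at least $\pi/2$ (from $v_{y}\in N^{P}_{S}(y_{S})$, $\xs\in S$, and Proposition \ref{n convex subset}), so Lemma \ref{n triangle ineq} applied to the triangle $(\xs,y_{S},y)$ forces $y_{S}$ strictly below $H$ and hence $s^{\ast}>0$. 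This is where the curvature upper bound $\delta>0$ enters essentially; elsewhere it is used only implicitly, through Proposition \ref{projection} and Corollary \ref{coro}, to legitimize the single-valued continuous projection onto $S$ on $U_{x}$.
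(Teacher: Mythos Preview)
Your construction of $H=\exp_{\xs}(v^\perp\cap B(0,\rho))$ and the intermediate-value argument for (2) are the same as the paper's. The substantive difference lies in part (1). You observe via the Gauss lemma that $\xs$ is a critical point of $d(x,\cdot)^2|_H$ and then assert that ``for $\rho$ and $t$ sufficiently small \ldots\ it is in fact the unique minimizer'', without justification or quantification. The paper does not argue this way: it supposes $d_H(x)<d_S(x)=t$, picks $z\in\textrm{proj}_H\,x$, and applies Lemma~\ref{n triangle ineq} to the triangle $\triangle(x,\xs,z)$. The radial geodesic from $\xs$ to $z$ lies in $H$, so the angles at both $\xs$ and $z$ are $\pi/2$; the two resulting comparison inequalities force $d(\xs,z)=0$, i.e.\ $z=\xs$, a contradiction. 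This is precisely where the curvature bound enters and produces the explicit radius $r=\min\{\ep(\xs)/2,\pi/(2\sqrt\delta)\}$. Your remark that $\delta$ is used ``only implicitly'' elsewhere is therefore inaccurate: in the paper it is used \emph{primarily} for (1), and your Gauss-lemma sketch does not substitute for it---a critical point need not be the global minimizer on $H$, and without the comparison argument you have no uniform $r$.

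Your worry about the strict inequality in (2) is well taken. The paper's own proof of (2) is the same crossing argument you give and likewise does not cleanly justify strictness; indeed any $y$ on the normal geodesic $\exp_{\xs}(sv)$ satisfies $d_H(y)=d_S(y)$ by (1), so strict inequality cannot hold throughout a full neighborhood of $x$. Reading (2) as $d_H\le d_S$ is the sensible interpretation. Your proposed sharpening via Lemma~\ref{n triangle ineq} on $\triangle(\xs,y_S,y)$, however, only yields $\cos(\sqrt\delta\, d_S(y))\cos(\sqrt\delta\, d(\xs,y_S))\ge\cos(\sqrt\delta\, d(\xs,y))$ from the obtuse angle at $y_S$; this does not by itself force $f(y_S)<0$, so that line would need further work to deliver strictness off the normal geodesic.
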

%%%%%%%%%%%%%%%%%%%%%%%%%%%%%%%%%%%%
\begin{proof}
Let $\xs\in \bd S$ and $v\in N^P_S(\xs)$ with $\norm v\norm=1$. By proposition \ref{n convex subset} we have
$$\li v, \exp_{\xs}^{-1}y \ri \leq 0, \qquad \textrm{for every }y \in S\cap B(\xs,\ep(\xs)).$$
Put $r=\min\{\frac{\ep(\xs)}{2}, \frac{\pi}{2\sqrt{\delta}}\}$, and let $H_0$ be the hyperplane in $T_{\xs}M$ with the normal vector $v$, i.e. $H_0=\{w\in T_{\xs}M: \li v,w\ri=0\}$. Take
\begin{equation}\label{H}
H:= \exp_{\xs}H_0\cap \overline{B(\xs, 2r)}.
\end{equation}
Note that $\xs\in H$, $H$ is a smooth codimension-1 submanifold of $M$ and $v\in N_{\xs}H$. Suppose that $W$ is the tubular neighborhood around $S$ mentioned in Proposition \ref{projection} and let $x= \exp_{\xs}tv\in B(\xs,r)\cap W$ such that $\xs=\proj x$. Take
$$U_x:=\{y\in B(\xs, r)\cap W:\li \exp_{\xs}^{-1}y, v\ri >0\},$$
and let $y\in U_x$ and $\gamma_1(t)$ be the unique minimizing geodesic joining $y$ and $y_{\ast}$. If we consider $\tilde{\gamma}_1(t):= \exp_{\xs}^{-1}\gamma_1(t)$, then our choice of $U_x$ grantees that $\tilde{\gamma}_1$ intersects $H_0\cap B(0_{\xs}, 2r)$. Thus $d_H(y)< d_S(y)$ for every $y\in U_x$.\\
It remains to show that $d_H(x)=d_S(x)$. To do so, note that  by Corollary \ref{level set}, $N_H^P(y)=\{\landa n(y): \landa \in \RR\}$ where $n(y)$ is a unite vector in normal vector bundle $N_yH$. We choose $n(y)$ such that $n(\xs)=v$ and $y\mapsto n(y)$ is a continuous vector field on a neighborhood of $H$. Suppose that $d_H(x)<d_S(x)$. According to Theorem \ref{n proximal eq} there exists $z\in H$ such that $t^{\prime}:=d_H(x)=d(x,z)<t$ and $x=\exp_{z}t^{\prime}n(z)$. Note that  $x$ and $z$ are in a convex neighborhood of $\xs$. We consider the geodesic triangle $\triangle(x,\xs,z)$ in the convex neighborhood $B(\xs,\ep(\xs))$ with the angels $\alpha$, $\beta$ and $\theta$ at vertices $\xs$, $z$ and $x$, respectively.
Since $v\in N_{\xs}H$, $n(z)\in N_{z}H$ and the unique minimizing geodesic joining $\xs$ and $z$ lies entirely in $H$, it follows that $\alpha=\beta= \frac{\pi}{2}$.
 Our choices of $W$ and $H$ imply
 $$d(x,\xs)=t<\frac{\pi}{2\sqrt{\delta}},\qquad d(x,z)=t^{\prime}<\frac{\pi}{2\sqrt{\delta}},\qquad t^{\prime\prime}:=d(\xs,z)<\frac{\pi}{\sqrt{\delta}}.$$
By Lemma \ref{n triangle ineq} we have
$$\cos(\sqrt{\delta}l)\leq \cos(\sqrt{\delta}l^{\prime\prime})\cos(\sqrt{\delta}l^{\prime})$$
and
$$\cos(\sqrt{\delta}l^{\prime})\leq \cos(\sqrt{\delta}l^{\prime\prime})\cos(\sqrt{\delta}l)$$
Since $\sqrt{\delta}l<\frac{\pi}{2}$ and $\sqrt{\delta}l^{\prime}<\frac{\pi}{2}$, we have $\cos(\sqrt{\delta}l)+\cos(\sqrt{\delta}l^{\prime})>0$. Thus $\cos(\sqrt{\delta}l^{\prime\prime})\geq 1$ and we conclude that $l^{\prime\prime}=0$ and $z=\xs$ which complete the proof.
\end{proof}
%%%%%%%%%%%%%%%%%%%%%%%%%%%%%%%%%%%%%%%%%%%%%%%%%%%%%%%%%%%%%%%%%%%%%%%%%%%%%%%%%%%%%%%%%%%%%%
\begin{remark} The proof of Theorem  \ref{n seperation} shows how curvature affects the radius of the convex neighborhood $B(x_{*}, 2r)$ used in (\ref{H}). In fact the greater curvature implies the less radius. Especially  if $M$ is a Hadamard manifold, then $W= M\setminus S$  in  Proposition \ref{projection} and  $H= \exp_x H_0$ in Theorem \ref{n seperation} for every $x\in \bd S$. This means that the separation theorem does hold on Hadamard manifolds like Euclidean spaces.
\end{remark}
%%%%%%%%%%%%%%%%%%%%%%%%%%%%%%%%%%%%%%%%%%%%%%%%%%%%%%%%%%%%%%%%%%%%%%%%%%%%%%%%%%%%%%%%%%
%%%%%%%%%%%%%%%%%%%%%%%%%%%%%%%%%%%%%%%%%%%%%%%%%%%%%%%%%%%%%%%%%%%%%%
%%%%%%%%%%%%%%%%%%%%%%%%%%%%%%%%%%%%%%%%%%%%%%%%%%%%%%%%%%%%%%%%%%%%%%%%%%%%%%%%%%%%%%%%%%%%%%
%%%%%%%%%%%%%%%%%%%%%%%%%%%%%%%%%%%%%%%%%%%%%%%%%%%%%%%%%%%%%%%%%%%%%%%%%%%%%%%%%%%%%%%%%%%%%%
\section{Convexity of distance function to a convex subset}
 In this section we study the convexity of distance function $d_S$ where $S$ is a locally convex subset of $M$. We will show that how curvature of $M$ affect this problem.
 \begin{theorem}\label{concave}
Let $M$ be a Riemannian manifold such that  its sectional curvature is positive. Suppose that $S$ is a convex subset which its boundary contains  a geodesic  segment of $M$. Then there exists a geodesic $\alpha:(-\ep,\ep)\ra M\setminus S$ such that $d_S \circ \alpha(t)$ is a concave function.
\end{theorem}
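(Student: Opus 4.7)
The plan is to build, from the boundary geodesic $\sigma$, a geodesic $\alpha$ running ``parallel'' to $\sigma$ at a small transverse distance $r$, and then to use a Jacobi-field computation to extract, from the positive sectional curvature, the strict negativity of $(d_S\circ\alpha)''(0)$.

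First I set up. Let $\sigma\colon(-a,a)\to\bd S$ be unit-speed with $x:=\sigma(0)$. By Proposition~\ref{n non empty} there is a unit $\xi\in N^P_S(x)$. Applying Corollary~\ref{c1} to $y=\sigma(s)$, and using that $\exp_x^{-1}\sigma(s)=s\dot\sigma(0)$ for small $s$, the inequality $s\langle\xi,\dot\sigma(0)\rangle\le0$ on both sides of $0$ forces $\xi\perp\dot\sigma(0)$. Fix a small $r>0$, set $p:=\exp_x(r\xi)$, and let $W\in T_pM$ be the parallel transport of $\dot\sigma(0)$ along the radial geodesic $\gamma(u):=\exp_x(u\xi)$. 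Define
$$\alpha(t):=\exp_p(tW),\qquad t\in(-\ep_0,\ep_0).$$
Remark~\ref{projection2} gives $\proj p=\{x\}$ and $d_S(p)=r$, so $p\in M\setminus S$; openness of $M\setminus S$ and continuity of $\alpha$ ensure $\alpha(t)\in M\setminus S$ after shrinking $\ep_0$.

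Next I would argue that $f(t):=d_S(\alpha(t))$ coincides with the smooth function $t\mapsto d(\alpha(t),\sigma)$ on a neighborhood of $0$. Theorem~\ref{n seperation} supplies a smooth codim-$1$ supporting hypersurface $H$ at $x$ with unit normal $\xi$, and the perpendicularity $\xi\perp\dot\sigma(0)$ places the geodesic $\sigma$ inside $H$. Because $S$ lies on the $H^{-}$ side while $\alpha(t)$ lies on the $H^{+}$ side, the single-valued projection furnished by Corollary~\ref{coro} must land on $\bd S\cap H$; for $t$ small enough it actually lies on $\sigma$. Consequently $f$ is of class $C^{2}$ on a neighborhood of $0$.

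The concavity then comes from a Jacobi-field computation. A first variation of arc length gives $f'(0)=0$ since $W$ is perpendicular to $\nabla d(\cdot,\sigma)|_{p}$. For $f''(0)$, consider the geodesic variation $\gamma_s(u):=\exp_{\sigma(s)}(uV(s))$, where $V(s)$ is the parallel transport of $\xi$ along $\sigma$, and let $J(u):=\partial_s\gamma_s(u)|_{s=0}$ be the corresponding Jacobi field along $\gamma$. By construction, $J(0)=\dot\sigma(0)$ and $J'(0)=\nabla_sV|_{s=0}=0$; the second-variation formula then expresses $f''(0)$ in terms of $J(r)$ and $J'(r)$. The main obstacle — and the heart of the argument — is verifying that this expression is strictly negative, and this is exactly where the positive sectional curvature enters: the Jacobi equation $J''+R(\dot\gamma,J)\dot\gamma=0$ combined with positivity of the sectional curvature of the $2$-plane spanned by $\xi$ and $\dot\sigma(0)$ yields, by Rauch's comparison theorem (equivalently, a Riccati-type argument), the strict inequality $|J(r)|<1$ and the strictly negative sign of the Hessian of $d(\cdot,\sigma)$ at $p$ in the direction $W$. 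Continuity of $f''$ near $0$ then extends $f''<0$ to a smaller interval $(-\ep,\ep)$, on which $f$ is concave.
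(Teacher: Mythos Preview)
Your setup---choosing $p=\exp_x(r\xi)$ and letting $\alpha$ start at $p$ with velocity the parallel transport of $\dot\sigma(0)$---coincides with the paper's. The genuine gap is the step in which you identify $f(t)=d_S(\alpha(t))$ with $d(\alpha(t),\sigma)$ near $t=0$. Your argument for this is that $S$ lies on one side of the supporting hypersurface $H=\exp_x(\xi^{\perp})$ while $\alpha(t)$ lies on the other; but that does \emph{not} force $\proj\alpha(t)\in\bd S\cap H$. The minimizing geodesic from $\alpha(t)$ to its foot point $q(t)\in\bd S$ may well cross $H$ at a point outside $S$, so nothing prevents $q(t)$ from lying strictly in the $H^{-}$ side. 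Worse, when $\dim M\ge3$ the boundary $\bd S$ is $(n-1)$-dimensional while $\sigma$ is one-dimensional; even knowing $q(t)\to x$ by continuity of $\proj$, there is no reason for $q(t)$ to remain on $\sigma$, and then $d_S(\alpha(t))<d(\alpha(t),\sigma)$ for $t\neq0$. In that case your Jacobi-field computation concerns the wrong function. (A related issue: your variation $\gamma_s(u)=\exp_{\sigma(s)}(uV(s))$ does not satisfy $\gamma_s(r)=\alpha(s)$, so even granting the identification it is not the variation whose second derivative equals $f''(0)$.)

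The paper avoids this entirely by never claiming equality. It takes the variation $\beta_s$ through geodesics joining $\sigma(s)$ to $\alpha(s)$, sets $L(s)=\mathrm{length}(\beta_s)$, and observes only the inequality $d_S(\alpha(s))\le L(s)$ with equality at $s=0$. The first variation gives $L'(0)=0$; for the second, $\tfrac12(L^2)''(0)=I(V,V)$ with $V$ the Jacobi variation field, and the index inequality $I(V,V)\le I(W,W)$ for the parallel field $W$ with the same endpoint values yields
\[
I(V,V)\le -\int_0^1\li R(W,\dot\beta_0)W,\dot\beta_0\ri\,dt<0
\]
by positivity of the sectional curvature. Hence $L$ has a strict maximum at $s=0$, and therefore so does $d_S\circ\alpha$. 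No identification of foot points, no separation theorem, and no Rauch comparison are needed---only the index lemma applied to a single variation.
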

 \begin{proof}
According to Proposition \ref{projection} there exists a tubular neighborhood $W$ of $S$ such that the the map $\proj:W\ra S$ is single valued and continuous. Now suppose that $x\in W\setminus S$ and $\proj(x):=\xs=\gamma(0)$ where $\gamma:[-\ep,\ep]\ra M$ is the image of a minimal geodesic whose image is contained in $S$. Put
$$v:=l_{\xs x}\vs $$
where $\vs:= \dot{\gamma}(0)$. Let $\alpha:(-\ep,\ep)\ra M$ be a geodesic in $M$ such that $\alpha(0)=x$ and $\dot{\alpha}(0)=v$. For each $s\in (-\ep, \ep)$, let $\beta_{s}:[0,1]\ra M$ be the unique minimizing geodesic joining the points $\gamma(s)$ and $\alpha(s)$. By letting $x$ close enough to $\xs$ take $\beta_s(t)= \exp_{\gamma(s)}t\exp_{\gamma(s)}^{-1}\alpha(s)$. Let us define the variation $\beta(s,t)= \beta_{s}(t)$ and $\beta(t):=\beta_{0}(t)$. Note that
$$l:=\norm \dot{\beta}(t)\norm= \norm \dot{\beta}(0)\norm= \norm \exp_{\xs}^{-1}x\norm= d_{S}(x)$$
If we denote the length of geodesic $\beta(s)$ by $L(s)$, then by the first variation formula we have
\begin{equation}\label{neq1}
\frac{d}{ds}\biggl|_{s=0}L(s)= \frac{1}{l}\li V(t),\dot{\beta}(t)\ri\biggl |_{t=0}^{t=1}-\frac{1}{l}\int_{0}^{1}\li V(t),(\dot{\beta}(t))^{\prime}\ri dt,
\end{equation}
where $V(t)= \frac{\partial \beta}{\partial s}(0,t)$ is the variation field of $\beta_{s}$.
Since $\beta$ is a geodesic, the last term in (\ref{neq1}) vanishes. Thus
$$\frac{d}{ds}\biggl|_{s=0}L(s)= \frac{1}{l}\bigg[ \li v, l_{\xs x} \exp_{\xs}^{-1}x\ri- \li \vs,\exp_{\xs}^{-1}x\ri \bigg]=0.$$
This shows that $s\mapsto L(s)$ has an extremum at $s=0$.

To compute the second derivative of $L(s)$, we use the second variation formula and (\ref{enery-lenghth}) to get
\begin{align*}
\frac{1}{2}\frac{d^{2}}{ds^{2}}\biggl|_{s=0}L^2(s)&=\frac{1}{2}E^{\prime\prime}(0)\\
&= \int_{0}^{1}(\li V^{\prime},V^{\prime}\ri- \li R(V,\dot{\beta})V,\dot{\beta}\ri) dt+ \li(\frac{\partial \beta_s}{\partial s})^{\prime}(0,t), \dot{\beta}(t)\ri\bigl|_{t=0}^{t=1}.
\end{align*}
Clearly the second term in the above statement vanishes. The first term is called the index form of $V$ and denoted by $I(V,V)$. Since $V$ is a jacobi field along $\beta$ then
$$I(V,V)\leq I(W,W)$$
where $W$ is a vector field along $\beta$ with $W(0)=V(0)$ and $W(1)=V(1)$ (see \cite{sakai}).
Consider $W$ as a parallel vector field along $\beta$ with $W(0)=\vs$. Obviously $W(1)=v$ and then
$$I(V,V)\leq I(W,W)= -\int_{0}^{1} \li R(W,\dot{\beta})W,\dot{\beta}\ri dt.$$
If  the sectional curvature of $M$ is positive we conclude that
$$\frac{1}{2}\frac{d^{2}}{ds^{2}}\biggl|_{s=0}L^2(s)= I(V,V)<0,$$
which shows that $L(s)$ has a maximum at $s=0$. Then
$$L(0)= d_{S}(x)= d_{S}(\alpha(0))> L(s)= d(\gamma(s), \alpha(s))\geq d_{S}(\alpha(s)).$$
This means that $d_{S}(\alpha(s))$ has a strict maximum at $s=0$  and is concave near $s=0$.
\end{proof}
%%%%%%%%%%%%%%%%%%%%%%%%%%%%%%%%%%%%%%%%%%%%%%%%%%%%%%%%%%%%5
Let us recall here the definition of geodesic point. Let $S$ be a Riemannian submanifold of Riemannian manifold $M$. We call $x\in S$ a geodesic point when for every $v\in T_xS$ the $M-$geodesic $\gamma_v$  starting from $x$ with velocity $v$ lies entirely in $S$,(see \cite{docarmo}).
\begin{corollary}\label{co}
Let $M$ be a Riemannian manifold such that  its sectional curvature is positive. Suppose that $S$ is a convex submanifold which its boundary contains a geodesic point. Then there exists a geodesic $\alpha:(-\ep,\ep)\ra M\setminus S$ such that $d_S \circ \alpha(t)$ is a concave function.
\end{corollary}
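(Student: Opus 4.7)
My plan is to deduce Corollary \ref{co} directly from Theorem \ref{concave}. Let $\xs \in \bd S$ be the geodesic point guaranteed by the hypothesis. Since geodesic points are by definition elements of $S$, we have $\xs \in \bd S \cap S$, and for every $v \in T_{\xs}S$ the $M$-geodesic $\gamma_v$ issuing from $\xs$ lies entirely in $S$. Picking any nonzero $v \in T_{\xs}S$ and invoking the geodesic-point property for both $v$ and $-v$, I obtain a two-sided geodesic segment $\gamma:(-\delta,\delta)\to M$ through $\xs$ whose image is contained in $S$.

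Next I would argue that this segment in fact lies in $\bd S$. The submanifold $S$ must have empty interior in $M$: if $S^\circ \neq \emptyset$, then at a boundary point $\xs$ some tangent direction would carry a geodesic immediately out of $S$, contradicting the geodesic-point condition. Hence $S$ has positive codimension, so $\bd S = \overline{S} \supseteq S$, and the geodesic segment $\gamma$ is contained in $\bd S$.

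With a geodesic segment of $M$ lying in $\bd S$, the hypotheses of Theorem \ref{concave} are met: $S$ is a convex subset of $M$ (a convex submanifold is convex in the paper's sense), the sectional curvature of $M$ is positive, and $\bd S$ contains a geodesic segment. Invoking Theorem \ref{concave} produces a geodesic $\alpha:(-\ep,\ep)\ra M\setminus S$ such that $d_S\circ\alpha$ is concave near $0$, which is exactly the desired conclusion.

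I do not foresee any substantive obstacle. The only step beyond a direct appeal to Theorem \ref{concave} is the observation that a convex submanifold with a geodesic point on its boundary must have positive codimension in $M$, and therefore is contained in its own topological boundary, allowing the geodesic lying \emph{a priori} only in $S$ to qualify as the geodesic in $\bd S$ required by the theorem.
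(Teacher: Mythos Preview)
Your proposal is correct and follows the same route as the paper, which offers no separate proof and treats Corollary~\ref{co} as an immediate consequence of Theorem~\ref{concave}: a geodesic point $\xs\in\bd S$ yields an $M$-geodesic segment through $\xs$ lying in $S$, and since a convex submanifold admitting such a boundary point must have positive codimension (hence $S\subseteq\bd S$), this segment lies in $\bd S$ and Theorem~\ref{concave} applies.
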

The next theorem investigates convexity of the distance function $d_S$ on an open neighborhood of $S$ where $S$ is a locally convex subset with regular boundary.
\begin{theorem}\label{last}
Let $M$ be  a complete Riemannian manifold and $S\subseteq M$ be a closed  locally convex set. Suppose that $W=\bd S$ is a $C^2$  codimension-1 submanifold and $n(x)\in N_xW$ is a unite vector such that $-n(x)\in N^P_S(x)$ for every $x\in W$. Let $h_x$ be the scalar second fundamental form of $W$ at $x\in W$. If $h_x>0$ for every $x\in W$, then there exists an open neighborhood $U$ of $S$ such that the distance function $d_S$ is convex on $U$.
\end{theorem}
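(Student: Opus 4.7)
The plan is to produce an open tube $U$ around $S$ on which the projection onto $S$ is single-valued and $d_S$ has non-negative second-order behaviour wherever $d_S>0$; convexity of $d_S$ on $U$ will then follow from a short one-dimensional matching argument. I would first apply Corollary \ref{coro} to obtain an open neighborhood $U_1\supseteq S$ on which $\proj$ is single-valued and continuous; writing $\pi(p)$ for the unique projection of $p\in U_1$, each $p\in U_1\setminus S$ satisfies $\pi(p)\in W=\bd S$ and $p=\exp_{\pi(p)}(-d_S(p)\,n(\pi(p)))$. The normal exponential map $\Phi\colon W\times[0,\infty)\to M$, $\Phi(x,t):=\exp_{x}(-tn(x))$, is $C^{1}$ (since $W$ is $C^{2}$ and $n$ is a $C^{1}$ unit normal field) with invertible differential at $t=0$, hence a local $C^{1}$-diffeomorphism onto a one-sided tube of $W$; in that tube $d_S\circ\Phi(x,t)=t$, and $\nabla d_S$ at $\Phi(x,t)$ is the outward unit tangent of the geodesic $s\mapsto\Phi(x,s)$ at $s=t$.

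Next, I would compute the Hessian of $d_S$ in the tube by a second-variation/Jacobi-field argument in the style of the proof of Theorem \ref{n proximal eq}. Since $\norm\nabla d_S\norm\equiv 1$, the gradient lies in the kernel of $\text{Hess}(d_S)$; on its orthogonal complement (the tangent space to the parallel hypersurface $W_t:=\Phi(W,t)$ at $\Phi(x,t)$) $\text{Hess}(d_S)$ agrees with the shape operator $A(t)$ of $W_t$ taken with respect to the outward unit normal. The family $A(t)$ obeys the Riccati equation
\[
A'(t)+A(t)^{2}+\mathcal R(t)=0,\qquad A(0)=h_x,
\]
along the geodesic $\gamma(t)=\Phi(x,t)$, where $\mathcal R(t)X:=R(X,\dot\gamma(t))\dot\gamma(t)$. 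Because $h_x>0$ for every $x\in W$ and $A(t)$ depends continuously on $(x,t)$, the set of $(x,t)\in W\times[0,\infty)$ for which $A(s)>0$ on all of $[0,t]$ is open and contains $W\times\{0\}$. I would take $U$ to be the union of $S$ with the $\Phi$-image of this open set; this is an open neighborhood of $S$ inside $U_1$ on which $\text{Hess}(d_S)\ge 0$ wherever $d_S>0$.

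Finally, fix a geodesic $\alpha\colon[0,1]\to U$ and set $g:=d_S\circ\alpha$, so $g\ge 0$ and $g$ is continuous; on every connected component of $\{g>0\}$ one has $g''(s)=\text{Hess}(d_S)(\dot\alpha,\dot\alpha)\ge 0$, so $g$ is convex on each such component. Any component interior to $[0,1]$ would be of the form $(s_0,s_1)$ with $g(s_0)=g(s_1)=0$, but extending convexity to $[s_0,s_1]$ by continuity would then force $g\le 0$ on $(s_0,s_1)$, contradicting $g>0$; hence $\{g>0\}$ has at most two components, each abutting $0$ or $1$. A direct secant comparison across the intervening zero interval then shows that $g$ is convex on all of $[0,1]$, giving convexity of $d_S$ on $U$. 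The main obstacle is the Hessian step: identifying $\text{Hess}(d_S)$ with the shape operator of $W_t$ via a Jacobi-field computation and then converting the pointwise strict inequality $h_x>0$ into non-negativity of $A(t)$ on a whole tube via the Riccati equation; the remaining steps are comparatively routine.
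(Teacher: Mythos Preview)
Your argument is correct and shares the same core computation with the paper: the Hessian of the distance function on the outside of $S$ coincides (on the complement of the gradient direction) with the shape operator of the parallel hypersurface, which at $t=0$ is $h_x>0$; continuity then yields $\mathrm{Hess}\,d_S\ge 0$ on a one-sided tube. Where you diverge from the paper is in two places. First, the Riccati equation is more machinery than needed: since $W$ is $C^2$, the signed distance to $W$ is $C^2$ on a tubular neighborhood and its Hessian is continuous, so strict positivity on $T_xW$ at $t=0$ persists on a neighborhood without ever invoking the ODE. Second, and more substantively, the paper avoids your one-dimensional matching argument entirely by working with the \emph{signed} distance $\varphi$ to $W$ (equal to $-d_W$ inside and $+d_W$ outside). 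Because $\varphi$ is $C^2$ across $W$ with $d^2\varphi(x)(v,v)=h_x(\alpha,\alpha)\ge 0$ (where $\alpha$ is the $T_xW$-component of $v$), one gets convexity of $\varphi$ on a full two-sided neighborhood $N$ of $W$; then $d_S=\max(\varphi,0)$ on $N$, and a maximum of convex functions is convex. This makes the crossing of $W$ automatic.

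Your matching step is nonetheless sound, and in fact it makes explicit a point the paper leaves implicit: even with the signed-distance trick, one must still patch the neighborhood $N$ of $W$ with the interior $S^\circ$ (where $d_S\equiv 0$) to obtain convexity on a full neighborhood of $S$, and that patching is exactly the elementary ``convex pieces with zero boundary values, glued across a zero plateau'' argument you wrote down. So the paper's route buys a cleaner treatment of the boundary $W$ via the $C^2$ function $\varphi$ and the identity $d_S=\max(\varphi,0)$, while your route is slightly heavier (Riccati, explicit secant matching) but more self-contained about why convexity survives across $\{d_S=0\}$.
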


Before proving Theorem \ref{last} we show that the hypothesis of positive definiteness of the scalar second form of the boundary is in fact very natural for locally convex subset with regular boundary.
\begin{proposition}\label{positive difinetness}
Let $S$ be a locally convex subset of Riemannian manifold $M$ and  $W=\bd S$ be a $C^2$  codimension-1 submanifold. Suppose that $n(x)\in N_xW $ is the inward pointing normal for every $x\in W$. Then $h_x\geq0$ for every $x\in W$.
\end{proposition}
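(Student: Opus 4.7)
The plan is to prove $h_x(v,v)\geq 0$ for every $v\in T_xW$ by reducing convexity of $S$ to a first-order inequality in the tangent space and then reading off the second fundamental form via a Taylor expansion along a $W$-geodesic.

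First I would verify that the outward unit normal $-n(x)$ belongs to $N^P_S(x)$. For small $t>0$ consider the $M$-geodesic $\sigma(t)=\exp_x(-t\,n(x))$. Since $W$ is a $C^2$ codimension-1 submanifold with unit normal $n(x)$, a standard tubular neighborhood argument shows that for $t$ small enough the nearest point in $W$ to $\sigma(t)$ is $x$. Because $S$ lies on the inward side of $W$ (the side to which $n(x)$ points) and $W=\bd S$, any interior point of $S$ is separated from $\sigma(t)$ by a crossing of $W$, hence is at distance at least $t$. Thus $x$ is the nearest point of $S$ to $\sigma(t)$, and Theorem \ref{n proximal eq} gives $-n(x)=\dot\sigma(0)\in N^P_S(x)$. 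Applying Proposition \ref{n convex subset} to the locally convex set $S$, this yields
\begin{equation*}
\langle n(x), \expxy \rangle \geq 0 \qquad \forall\, y\in S\cap B(x,\ep(x)).
\end{equation*}

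Next, for $v\in T_xW$ let $\gamma:(-\delta,\delta)\to W$ be the $W$-geodesic with $\gamma(0)=x$ and $\dot\gamma(0)=v$, and set $c(t):=\exp_x^{-1}\gamma(t)\in T_xM$, so $c(0)=0$ and $\dot c(0)=v$. In normal coordinates centered at $x$, $\gamma(t)$ has the same components as $c(t)$, and the Christoffel symbols vanish at the origin, so the ambient acceleration satisfies $(\nabla^M_{\dot\gamma}\dot\gamma)(0)=\ddot c(0)$. The Gauss formula together with $\nabla^W_{\dot\gamma}\dot\gamma=0$ gives $(\nabla^M_{\dot\gamma}\dot\gamma)(0)=h_x(v,v)\,n(x)$, hence $\ddot c(0)=h_x(v,v)\,n(x)$.

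Finally, since $\gamma(t)\in W\subset S$ and $\gamma(t)\in B(x,\ep(x))$ for $|t|$ small, the inequality above gives $\langle n(x), c(t)\rangle\geq 0$. Taylor expanding,
\begin{equation*}
0\leq \langle n(x), c(t)\rangle = \tfrac{t^{2}}{2}\, h_x(v,v) + o(t^{2}),
\end{equation*}
from which $h_x(v,v)\geq 0$ follows immediately. The main technical obstacle is the first step: identifying $-n(x)$ as an element of $N^P_S(x)$ requires care in separating ``projection onto $W$'' from ``projection onto $S$'', but once one observes that $S$ lies on only one side of the smooth hypersurface $W$ this is routine, and the remainder of the proof is a clean second-order expansion using only Proposition \ref{n convex subset} and the Gauss formula.
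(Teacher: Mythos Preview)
Your proof is correct, and the overall strategy—reduce local convexity of $S$ to the inequality $\langle n(x),\exp_x^{-1}y\rangle\ge 0$ via Proposition~\ref{n convex subset}, then extract the sign of $h_x(v,v)$ from a second-order computation—matches the paper. The execution, however, is genuinely different. The paper applies Proposition~\ref{n convex subset} at \emph{two} points, $x$ and $\gamma(t)$, obtaining both $\langle n(x),\exp_x^{-1}\gamma(t)\rangle\ge 0$ and $\langle n(\gamma(t)),\exp_{\gamma(t)}^{-1}x\rangle\ge 0$; parallel transporting the second inequality to $T_xM$ and combining gives $\big\langle l_{\gamma(t)x}n(\gamma(t))-n(x),\,\exp_x^{-1}\gamma(t)\big\rangle\le 0$, and after dividing by $t^2$ and letting $t\to 0$ the covariant-derivative formula~(\ref{p transport}) yields $\langle\nabla_v n(x),v\rangle\le 0$ directly. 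No special curve is needed—any curve in $W$ through $x$ works—and the shape operator appears without invoking the Gauss formula. Your argument, by contrast, uses the proximal inequality only at the single point $x$, but compensates by taking $\gamma$ to be a $W$-geodesic so that the Gauss formula identifies $\ddot c(0)$ with $h_x(v,v)\,n(x)$ in normal coordinates; the sign then drops out of a Taylor expansion. Your route is perhaps more geometric and uses less of the convexity hypothesis (one point instead of a neighborhood of points), at the price of importing the Gauss formula and restricting the choice of curve. A small bonus of your write-up is that you explicitly justify $-n(x)\in N^P_S(x)$ from the tubular-neighborhood picture; the paper treats this as implicit in the phrase ``inward pointing.''
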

\begin{proof}
By definition we require that
$$\li \nabla_v n(x),v\ri\leq 0\qquad \textrm{for every }x\in W \textrm{ and }v\in T_xW.$$
Suppose that $x\in W$ and $v\in T_xW$. Let $\gamma:(-\ep,\ep)\ra M\cap B(x, \ep(x))$ be a curve in  $W$ such that $\gamma(0)= x$ and $\dot{\gamma}(0)=v$. By Proposition \ref{n convex subset} we have
\begin{equation}\label{n eq1}
\li n(x), \exp_x^{-1}\gamma(t)\ri\geq 0,
\end{equation}
and
\begin{equation}\label{n eq2}
\li n(\gamma(t)), \exp_{\gamma(t)}^{-1}x\ri\geq 0.
\end{equation}
Put $v_t= l_{\gamma(t)x}n(\gamma(t))$. Thus by (\ref{n eq1}) and (\ref{n eq2}) we have
$$\li v_t, \exp_x^{-1}\gamma(t)\ri\leq0\leq \li n(x), \exp_x^{-1}\gamma(t)\ri.$$
Hence
$$\li \frac{v_t-n(x)}{t}, \frac{\exp_x^{-1}\gamma(t)}{t}\ri\leq0\qquad t\sim 0.$$
Put $\tilde{\gamma}(t)=\expx^{-1} \gamma(t)$ which is a curve in $\txm$ passing through $0_x$ with velocity $v$.
By letting $t\ra0$ we conclude that
$$\lim_{t\rightarrow 0}\frac{\expx^{-1}\gamma(t)}{t}= \dot{\tilde{\gamma}}(0)=v,$$
and by (\ref{p transport}) we get
$$\li \nabla_v n(x), v\ri\leq 0.$$
\end{proof}
\begin{proof}[Proof of Theorem \ref{last}]
Assume that $\widetilde{W}$ is the tubular neighborhood of $W$ such that the projection map is single valued by Proposition \ref{projection}. Put
$$U:=\{y\in \widetilde{W}\mid y=\expx tn(x) \textrm{ for } t>0 \textrm{ and } x\in W \},$$
and
$$V:=\{y\in \widetilde{W}\mid y=\expx tn(x) \textrm{ for } t<0 \textrm{ and } x\in W \}.$$
Let us define
\begin{align}
\varphi &: \widetilde{W}\ra \RR \\
\varphi(y)&=\left\{
         \begin{array}{ll}
              -d_W(y), & \hbox{$y\in U\cup W$;} \\
              d_W(y), & \hbox{$y\in V$.}
              \end{array}
            \right.
\end{align}
In  fact for  $t$ sufficiently small, by Remark \ref{projection2} we have $\fii(\expx tn(x))= -t$ which implies $\nabla \fii(x) =-n(x)$. Now suppose that $v,w\in T_xW$. By definition of second order derivative we have
$$d^2\fii(x)(v,w)= \li \nabla_v\nabla \fii(x),w\ri= \li \nabla_v -n(x), w\ri= \li \textbf{S}_x v,w\ri= h_x(v,w)$$
where $\textbf{S}$  is the shape operator of $W$. By simple calculation one can see that for $v\in \txm$ and its tangential component $\alpha\in T_xW$, we have
$$d^2\fii(x)(v)^2= d^2\fii(x)(\alpha)^2= h_x(\alpha, \alpha).$$
Therefore, if the scalar second fundamental form $h_x$ of $W$ is positive definite for every $x\in W$, then $\fii$ is a convex function in a neighborhood of $W$. Since $\fii(y)= d_S(y)$ for $y\in V$, we can conclude that under this assumption $d_S$ is a convex function on a neighborhood of $S$.
\end{proof}
%%%%%%%%%%%%%%%%%%%%%%%%%%%%%%%%%%%%%%%%%%%%%%%%%%%%%%%%%
\begin{example}
Consider $M=\mathbb{S}^2$ the unite sphere in $\RR^3$. Let $S=\{(x,y,z): z\geq \frac{1}{2}\}\subset M$. In fact $S=\overline{B(N,\frac{\pi}{3})}$  where $N$ is the north pole. It is obvious that $S$ is convex with $C^{\infty}$ boundary. By choosing inward normal vector $n(p)$ on $\bd S$, $h_p>0$ for every $p\in \bd S$; since otherwise $\bd S$ is a geodesic segment which is  a contradiction. Hence by Theorem \ref{last} there exist an open neighborhood $U$ of $S$ such that the distance function $d_S$ is convex on $U$.
\end{example}
\begin{example}
Suppose that $M$ is the paraboloid of revolution in $\RR^3$. Let $S_1=\{(x,y,z)\in M: z\leq z_0\}$  where $z_0\in (0,\infty)$. Consider  the function $f:M\ra \RR$ with $f(x,y,z)= x^2+y^2$. If we consider local parametrization of $M$ by  $(s,\theta)\mapsto (s\cos \theta,s\sin \theta, s^2)$, then $f(s,\theta)=s^2$ and
$$d^2f(s,\theta)= \left(
                                                                                                                                             \begin{array}{cc}
                                                                                                                                               \frac{2}{1+4s^2} & 0 \\
                                                                                                                                               0 & \frac{2s^2}{1+4s^2} \\
                                                                                                                                             \end{array}
                                                                                                                                           \right).$$
This shows that $d^2 f>0$ and therefore $S_1$ is a locally convex subset of $M$ with $h_p>0$ for every $p \in \bd S_1$ with inward pointing normal direction.
Thus by Theorem \ref{last}, $d_{S_1}$ is a convex function in a neighborhood of $S_1$.
\end{example}
%%%%%%%%%%%%%%%%%%%%%%%%%%%%%%%%%%%%%%%%%%%%%%%%%%%%%%%%%%%%%%%%%%%%%%%%%%%
Here we propose two approaches for considering convexity of function $d_S$ where the boundary of $S$ is not $C^2$. The first one  is the support principle for convexity which we will explain it in the following theorem. Recall that a function $f$ is said to be supported by a function $g$ at a point $p$ if $f(p)=g(p)$ and $g(q)\leq f(q)$ for all $q$ in a neighborhood of $p$. For more details one can see \cite{ Green}.
\begin{theorem}
A continuous function $f$ on a Riemannian manifold $M$ is convex if it is supported at each point $p$ by a convex function on some neighborhood of $p$.
\end{theorem}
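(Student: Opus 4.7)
The plan is to reduce the statement to a one-dimensional claim along geodesics. By the paper's definition of convexity, $f$ is convex on $M$ if and only if $f\circ\gamma$ is convex for every geodesic $\gamma$ in $M$, so fix such a geodesic $\gamma:[a,b]\to M$ and set $h(t):=f(\gamma(t))$. At each $t_0\in[a,b]$ the hypothesis produces a convex function $g$ on an open neighborhood $U$ of $\gamma(t_0)$ with $g(\gamma(t_0))=h(t_0)$ and $g\le f$ on $U$. Since $\gamma$ is continuous, $\gamma^{-1}(U)$ contains an interval $J$ around $t_0$, and any restriction of a geodesic is still a geodesic; hence $g\circ\gamma$ is convex on $J$ and supports $h$ from below at $t_0$. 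The problem therefore reduces to the one-variable assertion: a continuous function $h$ on $[a,b]$ which at every point is supported from below by a one-variable convex function is itself convex.

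I would prove this by contradiction. If $h$ is not convex, there exist $t_1<t_2$ in $[a,b]$ such that, letting $L$ be the affine function with $L(t_i)=h(t_i)$, the continuous function $\psi:=h-L$ satisfies $\psi(t_1)=\psi(t_2)=0$ and attains a maximum $M>0$ somewhere in $(t_1,t_2)$. The level set $K:=\{t\in[t_1,t_2]:\psi(t)=M\}$ is then a nonempty closed subset of the open interval $(t_1,t_2)$; set $t^{\ast}:=\sup K$, so $t^{\ast}<t_2$ and $\psi(t^{\ast})=M$ by closedness. Apply the supporting hypothesis at $t^{\ast}$: a convex function $g$ on some open interval $I\subset(t_1,t_2)$ containing $t^{\ast}$ satisfies $g(t^{\ast})=h(t^{\ast})$ and $g\le h$ on $I$. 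Then $g-L$ is convex on $I$, equals $M$ at $t^{\ast}$, and is bounded above by $\psi\le M$ throughout $I$.

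The concluding ingredient is the elementary fact that a convex function on an open interval attaining its maximum at an interior point is locally constant there: choosing $s_1<t^{\ast}<s_2$ in $I$ and writing $t^{\ast}=\lambda s_1+(1-\lambda)s_2$, convexity gives $M=(g-L)(t^{\ast})\le\lambda(g-L)(s_1)+(1-\lambda)(g-L)(s_2)\le M$, which forces $(g-L)(s_1)=(g-L)(s_2)=M$. Consequently $\psi\ge g-L=M$ on a right-neighborhood of $t^{\ast}$ inside $I$, hence $\psi=M$ there, producing points of $K$ strictly larger than $t^{\ast}$ and contradicting $t^{\ast}=\sup K$. I do not anticipate any serious obstacle: the entire argument takes place along a single geodesic and uses only continuity, the paper's definition of convexity, and this one-line lemma on convex functions, so no curvature or manifold-specific input enters beyond the reduction carried out in the first paragraph.
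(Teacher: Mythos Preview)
The paper does not supply its own proof of this theorem; it is quoted as a known result with a reference to Greene's survey, so there is nothing in the paper to compare your argument against. Your proof is correct and is essentially the standard one: the reduction to a single geodesic is forced by the paper's definition of convexity, and the one-variable contradiction via the maximum of $\psi=h-L$ together with the interior-maximum principle for convex functions is exactly the classical support-function argument. One cosmetic point: you may as well take $I$ small enough that $I\subset(t_1,t_2)$ from the outset (you implicitly do), and the conclusion $(g-L)\equiv M$ on all of $I$ already gives $I\subset K$, so you do not even need to speak of a ``right-neighborhood''. No manifold-specific input is needed, which is precisely why the paper relegates the statement to a citation.
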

\begin{remark}
By making  appropriate changes in Theorem \ref{n seperation}, one can find a neighborhood $W$ of $S$ such that for every $x\in W$ there exist a neighborhood $U_x\subseteq W$ and a smooth codimension-1  submanifold  $H$ satisfying $d_H(x)=d_S(x)$ and $d_H(y)\leq d_S(y)$ for every $y\in U_x$, i.e. $d_S$ is supported by $d_H$ at $x$.  If we construct $H$ such that $h_{\xs} >0$ (by imposing inward pointing normal) or equivalently if $\exp_{\xs}^{-1}H$ is strictly convex at $0_{\xs}$ in the Euclidean sense (see \cite[p. 372]{Gromove}) where $\xs=\proj x$, then it follows from  Theorem \ref{last} and the support principle for convexity  that $d_{S}$ is a convex function on $W$. This means that in the proof of Theorem \ref{n seperation}, let  $v=(0,...,0,-1)$ (by using some rotation in normal coordinate system around $\xs$), then if one can find $a>0$ such that $H_0=\{(w_1,...,w_n)\in T_{\xs}M|w_n=a(\sum_{i=1}^{n-1} w_i^2)$, then by Theorem \ref{n seperation} $d_S$ is supported by convex function $d_H$ which makes $d_S$ to be a convex function in a neighborhood of $S$.
\end{remark}
The next approach is more general and can be applied to every function on Riemannian manifolds. At first let us define second superjet of a function $f$ at every point $x$ in the domain of $f$, denoted by $J^{2,+} f(x)$, as
$$\big\{\big(d\fii(x),d^{2}\fii(x)\big)\mid \fii \in C^{2}(U), f-\fii \text{ attains a local maximum at } x\big\}.$$
Where $U$ is an open subsets of $M$ including the domain of $f$. Let $\Omega$ be an open convex subset of $M$ and $f:\Omega\ra \RR$ be a convex function. Let $\fii$ be a $C^2$ function such that $f(x_0)=\fii(x_0)$ and $f-\fii$ attains its maximum at $x_0\in \Omega$. Let $\gamma:[0,1]\ra \Omega$ be a geodesic such that for some $t_0\in(0,1)$ we have $\gamma(t_0)=x_0$. The convexity of $f$ implies that $f(\gamma(t))<(1-t)f(\gamma(0))+tf(\gamma(1)):=g(t)$ for $t\in[0,1]$. If $f(\gamma(t_0))<g(t_0)$, then by continuity of $\fii$ we can deduce that $\fii(\gamma(t))<(1-t)\fii(\gamma(0))+t\fii(\gamma(1))$ for $t$ sufficiently close to $t_0$. If $f(\gamma(t_0))=g(t_0)$ then by use of the mean value theorem for the function $\fii(\gamma(t))-g(t)$ one can see that this function attains its minimum at $t_0$ which implies that $d^2\fii(x_0)(\dot{\gamma}(t_0),\dot{\gamma}(t_0))\geq 0$. These show that if $f$ is convex and $(\xi, X)\in J^{2,+}f(x)$, then $X\geq0$. The next theorem states that the converse statement is true on manifolds with positive sectional curvature.

In the proof of the next theorem we use a special coordinate system called Fermi coordinate system. Let us explain the definition of this coordinate. Suppose that $\gamma:[a,b]\ra M$ be a unit speed minimizing geodesic in $M$. Assume that $\{e_i\}_{i=0}^{n-1}$ be an orthonormal basis for $T_{\gamma(a)}M$ with $e_0:=\dot{\gamma}(a)$. Let $e_i(t)$ be the parallel transport of $e_i$ to $\gamma(t)$. Note that $\{e_i(t)\}$ is an orthonormal frame for $T_{\gamma(t)}M$ for every $t\in [a,b]$. Consider the map $\Gamma: [a,b]\times \RR^{n-1}\ra M$ given by
$$\Gamma(t,x_1,...,x_{n-1})=\exp_{\gamma(t)}(\sum x_i e_i(t)).$$
One can see  $d\Gamma (t,0,...,0)$ is invertible for very  $t\in [a,b]$ which implies by inverse function theorem that there exist $\mu>0$ and $T_\mu\subseteq M$ such that $\Gamma:[a,b]\times B(0,\mu)\ra T_\mu$ is a diffeomorphism. Thus $(T_\mu,\phi)$ where $\phi:=\Gamma^{-1}$ is a coordinate system around $\gamma$ which is called Fermi coordinate system. For more details see \cite{Aubin, Gray}. Note that by similar observation as in the proof of Theorem \ref{n seperation}, one can see that $\proj \exp_{\gamma(t)}(\sum x_i e_i(t))=  \gamma(t)$ for every $t$ and $\norm (x_1,...,x_{n-1})\norm <\mu$ where $\mu>0$ is small enough and $S:=\gamma[a,b]$.

According to what we said before, the following theorem is a generalization of positive semi-definiteness of second order differential of a smooth convex function.
%%%%%%%%%%%%%%%%%%%%%%%%%%%%%%%%%%%%%%%%%%%%%%%%%%%%%%%%%%%%%%%%%%%%%%%%%%%%%%%%%%%%%%%%%%%%%%%%%%%%%%%%%%%%%%%%
\begin{theorem}\label{jets}
Let $M$ be a Riemannian manifold with positive sectional curvature and  $\Omega$ is a convex subset of $M$. Let $f:\Omega\rightarrow \RR$ is continuous. Suppose that $X\geq0$  for every $(\xi, X)\in J^{2,+}f(x)$ and for every $x\in \Omega$. Then $f$ is a convex function.
\end{theorem}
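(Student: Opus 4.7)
The plan is to argue by contradiction: assuming $f$ is not convex, I would construct a smooth upper test function $v$ whose manifold Hessian at the touching point has a strictly negative eigenvalue, violating the hypothesis on $J^{2,+}f$.

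If $f$ fails to be convex, one can find a unit-speed minimizing geodesic $\gamma:[0,L]\to\Omega$ and a $t_0\in(0,L)$ such that, writing $\ell$ for the affine interpolation of $f(\gamma(0))$ and $f(\gamma(L))$, the function $h(t):=f(\gamma(t))-\ell(t)$ satisfies $h(0)=h(L)=0$ and $h(t_0)>0$. I would then work in Fermi coordinates $\Gamma:[0,L]\times B(0,\mu)\to T_\mu\subset\Omega$ along $\gamma$, and introduce two parameters $0<\ep_2\ll 1\ll\ep_1$ together with the smooth test function
$$v(\Gamma(t,x)):=\ell(t)-\ep_2(t-t_0)^2+\ep_1|x|^2.$$
In the orthonormal Fermi frame $(\dot\gamma,e_1,\dots,e_{n-1})$ its coordinate Hessian is $\operatorname{diag}(-2\ep_2,2\ep_1,\dots,2\ep_1)$, which already displays the desired negative eigenvalue $-2\ep_2$ along $\dot\gamma$. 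To lift this to a superjet of $f$, I would locate a maximum of $f-v$ in the interior of $\overline{T_\mu}$. Along the axis $x=0$, $f-v=h(t)+\ep_2(t-t_0)^2$ takes the value $h(t_0)>0$ at $t_0$ but is at most $\ep_2L^2$ at the endpoints, so for $\ep_2$ small the axial maximum is interior; on the lateral face $|x|=\mu$, the penalty $-\ep_1|x|^2$ pushes $f-v$ below $h(t_0)$ once $\ep_1\mu^2$ exceeds the oscillation of $f-\ell$ on $T_\mu$. Hence $f-v$ attains its maximum at an interior point $\hat y=\Gamma(\hat t_\ast,\hat x_\ast)$ with $\hat t_\ast\in(0,L)$, and the inequality $f(\hat y)-v(\hat y)\geq h(t_0)$ forces $\ep_1|\hat x_\ast|^2\leq C$ for a constant $C$ independent of $\ep_1$, so $|\hat x_\ast|=O(\ep_1^{-1/2})$. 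Being a local max of $f-v$, $\hat y$ gives $(dv(\hat y),d^2v(\hat y))\in J^{2,+}f(\hat y)$, whence $d^2v(\hat y)\geq 0$ by hypothesis.

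The main obstacle would then be to contradict this positivity via an off-axis Hessian computation, and this is precisely where the positive sectional curvature assumption enters. On $\gamma$ the Christoffel symbols of Fermi coordinates vanish and the manifold Hessian coincides with the coordinate one, but at $\hat y$ off the axis there are Jacobi-field corrections. The standard expansion $g_{tt}(t,x)=1-R_{titj}(\gamma(t))x^ix^j+O(|x|^3)$ of the Fermi metric yields $\Gamma^{x_j}_{tt}(t,x)=R_{titj}(\gamma(t))x^i+O(|x|^2)$, and therefore
$$d^2v(\hat y)(\partial_t,\partial_t)=-2\ep_2-2\ep_1R_{titj}(\gamma(\hat t_\ast))\hat x_\ast^i\hat x_\ast^j+O(|\hat x_\ast|^2)+O(\ep_1|\hat x_\ast|^3).$$
Positive sectional curvature ensures $R_{titj}(\gamma(\hat t_\ast))\hat x_\ast^i\hat x_\ast^j\geq K|\hat x_\ast|^2$ for a uniform $K>0$ along the compact arc $\gamma$, so the leading correction is non-positive and reinforces $-2\ep_2$; together with the off-diagonal Christoffel corrections of size $O(|\hat x_\ast|)$, the remaining errors are $O(\ep_1^{-1/2})$ and are absorbed for $\ep_1$ large. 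This yields $d^2v(\hat y)(\partial_t,\partial_t)\leq-\ep_2<0$, contradicting $d^2v(\hat y)\geq 0$. The delicate point is that without positive curvature the leading Jacobi-field correction could flip sign and overwhelm the $-2\ep_2$ that was built into $v$.
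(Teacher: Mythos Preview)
Your argument is correct and follows the same architecture as the paper's proof: contradict convexity along a geodesic $\gamma$, work in a Fermi tube $T_\mu$ around $\gamma$, build a smooth barrier that is concave in the $t$-direction and carries a large multiple of the squared distance to $\gamma$, force an interior touching point, and exhibit a strictly negative $\partial_t$-eigenvalue of the Riemannian Hessian there. The paper's barrier is $\varphi(y)=-\delta\,d^2(y,\gamma(\lambda_0))+C\,d_S^2(y)+k$ with $S=\gamma[0,1]$; since $d_S^2=|x|^2$ in Fermi coordinates, this is essentially your $v$ with the coordinate quadratic $\ell(t)-\ep_2(t-t_0)^2$ replaced by the intrinsic $-\delta\,d^2(\cdot,\gamma(\lambda_0))$.

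The one substantive difference is how positive curvature enters. The paper invokes its earlier Theorem~\ref{concave} (an index-form computation) to conclude that $d^2(d_S^2)(y_0)$ has a strictly negative direction, and then uses concavity of $-\delta\,d^2(\cdot,\gamma(\lambda_0))$ to preserve this. You instead insert the negativity by hand via $-\ep_2(t-t_0)^2$ and use the Fermi expansion $\Gamma^{x_j}_{tt}=R_{titj}x^i+O(|x|^2)$ to show that the curvature correction to $d^2(\ep_1|x|^2)(\partial_t,\partial_t)$ is non-positive and hence cannot spoil it. Both routes are the same Jacobi-field fact in different clothing; your version has the minor advantage of making visible that only $\mathrm{sec}\geq 0$ is actually needed for this step, since the curvature term merely has to be $\leq 0$ rather than strictly negative.
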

\begin{proof}
Suppose that there exist a geodesic $\gamma:[0,1]\rightarrow \Omega$ such that $f\circ\gamma$ is not convex. Let $\lambda_0\in (0,1)$ is such that
$$f\circ \gamma(\lambda_0)>(1-\lambda_0)f(x_0)+\lambda_0f(y_0)$$
where $x_0:=\gamma(0)$ and $y_0:=\gamma(1)$. By shrinking the geodesic segment $\gamma[0,1]$ if necessary, one can assume that there exist $k,k_0\in \RR$ such that for every $\lambda\in [0,1]$ we have
$$f\circ \gamma(\lambda_0)>k>k_0>(1-\lambda)f(x_0)+\lambda f(y_0).$$
Since $f$ is continuous, we can assume that there exists $\mu>0$ such that for every $y\in B(x_0, \mu)$ and $y\in B(y_0,\mu)$ we get
\begin{equation}\label{jet-1}
  k_0>f(y).
\end{equation}
Suppose that $\mu^\prime>\mu$ is small enough such that $(T_{\mu^{\prime}},\phi_{\mu^\prime})$ is a fermi coordinate system around $\gamma(t)$. Define $\fii:T_{\mu^\prime}\rightarrow \RR$ by
$$\fii(y)= -\delta d^2(y, \gamma(\lambda_0))+C d_S^2(y)+k$$
where $\delta$ is such that
\begin{equation}\label{jet-2}
 k_0= -\delta(\max\{\lambda_0, 1-\lambda_0\}\norm \dot{\gamma}(0)\norm+\mu)^2+k,
\end{equation}
and $C$ is large enough such that
\begin{equation}\label{jet-3}
  C \mu^2+k_0\geq \sup_{\overline{T_{\mu^\prime}}}f,
\end{equation}
and $S:=\gamma[0,1]$. Note that $\fii$ is $C^2$ on $T_{\mu^{\prime}}$ and $f-\fii$ is a continuous function on $\overline{T_{\mu}}$. Thus $f-\fii$ admits its maximum on $\overline{T_{\mu}}$. In the following  we show that this is a local maximum.

If $y=(0, \xi)$ in fermi coordinate, then by using the fact that $\proj y=x_0$ we have
  \begin{align*}
   d^2(y, \gamma(\lambda_0))&\leq [ d(y, \proj y)+d(\proj y, \gamma(\lambda_0)]^2\\
   &\leq[\mu+\norm \dot{\gamma}(0)\norm \lambda_0]^2\\
   &\leq (\mu+ \max\{\lambda_0, 1-\lambda_0\}\norm \dot{\gamma}(0)\norm)^2
   \end{align*}
   Thus we have from (\ref{jet-2}) and (\ref{jet-1})
   $$\fii(y)\geq -\delta(\mu+ \max\{\lambda_0, 1-\lambda_0\}\norm \dot{\gamma}(0)\norm)^2+k=k_0>f(y).$$

If $y=(s,\xi)$ with $\norm \xi\norm=\mu$ and  $s\in (0,1)$, we have by (\ref{jet-3})
$$\fii(y)\geq -\delta[\mu+ \max\{\lambda_0, 1-\lambda_0\}\norm \dot{\gamma}(0)\norm]^2+C\mu^2+k =k_0+C\mu^2\geq \sup_{\overline{T_{\mu^\prime}}} f\geq f(y).$$
If $y=(1,\xi)$, we conclude similarly that $\fii(y)\geq f(y)$. Thus for every $y\in \bd T_{\mu}$ we get $\fii(y)\geq f(y)$. Note that $\fii(\gamma(\lambda_0))=k<f(\gamma(\lambda_0))$ which implies that $f-\fii$ has a maximum in the interior of $T_{\mu}$. Let $y_0$ be the point in $T_{\mu}$ such that $f-\fii$ gets its maximum at this point. By the definition of second order superjets we have $(d\fii(y_0),d^{2}\fii(y_0))\in J^{2,+}f(y_0)$. Note that by Theorem \ref{concave} there exists $X\neq 0$ in  the spaces of symmetric bilinear forms at $y_0$ such that $d^2d_S^2(y_0)(X,X)<0$. Since $-\delta d^2(y, \gamma(\lambda_0))$ is a concave function near $\gamma(\lambda_0)$, we get $d^2\fii(y_0)(X,X)<0$ which is a contradiction.
\end{proof}
%%%%%%%%%%%%%%%%%%%%%%%%%%%%%%%%%%%%%%%%%%%%%%%%%%%%%%%%%%%%%%%%%%%%%%%%%%%%%%%%%
%%%%%%%%%%%%%%%%%%%%%%%%%%%%%%%%%%%%%%%%%%%%%%%%%%%%%%%%%%%%%%%%%%%%%%%%%%%%%%%%%%%%%%%%%%%%
%%%%%%%%%%%%%%%%%%%%%%%%%%%%%%%%%%%%%%%%%%%%%%%%%%%%%%%%%%%%%%%%%%%%%%%%%%%%%%%%%%%%%%%%%%%%%%
%%%%%%%%%%%%%%%%%%%%%%%%%%%%%%%%%%%%%%%%%%%%%%%%%%%%%%%%%%%%%%%%%%%%%%%%%%%%%%%%%%%%%%%%%%%%%%%

%%%%%%%%%%%%%%%%%%%%%%%%%%%%%%%%%%%%%%%%%%%%%%%%%%%%%%%%%%%%%%%%%%%%%%%%%%%%%%%%%%%%%%%%%%%%%%%

\end{document}